\documentclass[a4paper,10pt]{article}
\usepackage{amsfonts,amsmath,amssymb,amsthm,verbatim,placeins,comment,placeins,caption}
\usepackage[dvips]{graphicx}
\usepackage[square,sort,comma,numbers]{natbib}
\usepackage{bbm}
\usepackage{color}
\usepackage[usenames,dvipsnames,svgnames,table]{xcolor}
\usepackage{geometry}
\geometry{width=16cm,height=25cm,centering}
\usepackage[cp1250]{inputenc}
\usepackage{lmodern}
\usepackage[OT2,T1]{fontenc}
\usepackage{graphicx}
\usepackage{caption,subfig}
\usepackage{fancyhdr}
\usepackage[style=english]{csquotes}

\newtheorem{theorem}{Theorem}[section]

\theoremstyle{definition}

\theoremstyle{remark}
\newtheorem{remark}[theorem]{Remark}

\numberwithin{equation}{section}

\newcommand{\N}{\mathbb{N}}
\newcommand{\R}{\mathbb{R}}

\begin{document}
\date{}
\title{\textbf{Correlated continuous time random walks and fractional Pearson diffusions}}

\maketitle

\centerline{\author{\textbf{N.N. Leonenko}$^{\textrm{a}}$, \textbf{I. Papi\'c}$^{\textrm{b}}$, \textbf{A. Sikorskii}$^{*\textrm{c}}$, \textbf{N. \v{S}uvak}$^{\textrm{b}}$}}

{\footnotesize{
$$\begin{tabular}{l}
  $^{\textrm{a}}$ \emph{School of Mathematics, Cardiff University, Senghennydd Road, Cardiff CF244AG, UK} \\
  $^{\textrm{b}}$ \emph{Department of Mathematics, J.J. Strossmayer University of Osijek, Trg Ljudevita Gaja 6, HR-31 000 Osijek, Croatia} \\
  $^{\textrm{c}}$ \emph{A423 Wells Hall, Department of Statistics and Probability, Michigan State University, East Lansing, MI 48824, USA} \\
\end{tabular}$$}}

\bigskip

\noindent\textbf{Abstract.}
Continuous time random walks have random waiting times between particle jumps. We define the correlated continuous time random walks (CTRWs) that converge to fractional Pearson diffusions (fPDs). The jumps in these CTRWs are obtained from Markov chains through the Bernoulli urn-scheme model and Wright-Fisher model. The jumps are correlated so that the limiting processes are not L\'evy but diffusion processes with non-independent increments. The waiting times are selected from the domain of attraction of a stable law.

\bigskip

\noindent\textbf{Key words.} Markov chains; fractional diffusion; Pearson diffusions; urn-scheme models; Wright-Fisher model; continuous time random walks

\bigskip

\noindent\textbf{Mathematics Subject Classification (2010):} 47D07, 60J10, 60J60, 60G22, 60G50.

\bigskip

\section{Introduction} \label{intro}

Continuous time random walks (CTRWs) have random waiting times between particle jumps. When jumps and waiting times are independent, the CTRW is called decoupled, and only decoupled CTRWs will be considered throughout this paper. For more information and applications of coupled and uncoupled CTRW we refer to \cite{Meerschaert2006114, PhysRevE.79.066102}.

When particle jumps $Y_1,\dots , Y_n,\dots$ are independent and identically distributed (iid), the random walk $S_{n}=Y_{1}+\ldots+Y_{n}$ converges to either the Brownian motion or a stable L\'evy process \cite[Chapter 4]{MeerschaertSikorskii_2011}. When jumps are separated by the iid waiting times $G_1,\dots G_n$ from the domain of attraction of a positively skewed stable law with stability index $0<\beta <1$, the process $S(N_t)$, where $T_{n}=G_{1}+\ldots+G_{n}$,  $N_{t}=\max\{n \geq 0 \colon T_{n} \leq t\}$, gives the location of a particle at time $t \geq 0$ and is known as the CTRW. By applying the continuous mapping theorem (see \cite{MeerschaertSikorskii_2011}, Theorem 4.19) it follows that, with proper scaling, $S(N(\lfloor ct \rfloor ))$ converges as $c \to \infty$, to $A(E_{t})$, where $A$ is either the Brownian motion or a stable L\'evy process, and $E_t$ is the inverse (or a hitting time) of a standard $\beta $-stable subordinator $\{D(t),\, t\ge 0\}$. Meerschaert and colleagues have shown the convergence to hold in $M_1$ and $J_1$ Skorokhod topology (\cite{meerschaert2004limit, Henry_Straka_2011}), and have obtained other L\'evy processes as the outer process in the limit by employing triangular arrays \cite{meerschaert2008triangular}. The case of correlated jumps given by the stationary linear process was considered in \cite{meerschaert2009correlated}, where the outer process in the limit was either a stable L\'evy process or a linear fractional stable motion, depending on the strength of the dependence in the particle jump sequence. In this paper, we define a different sequence of correlated particle jumps defined based on urn schemes that would result in weak convergence of the corresponding random walk to a Pearson diffusion. Pearson diffusions have stationary distributions of Pearson type. They include Ornstein-Uhlenbeck (OU), Cox-Ingersoll-Ross (CIR), Jacobi, Reciprocal Gamma, Fisher-Snedecor and Student diffusions (\cite{ALR_2009, AvramLeonenkoSuvak_WPD_2013}), and in this paper we consider the first three with non-heavy tailed stationary distributions. Then we separate jumps by the waiting times to obtain the CTRWs converging to fractional Pearson diffusions (fPDs).

Another approach to the limiting behaviour of CTRW is given in the recent paper \cite{kolokoltsov2009generalized} and book \cite{Kolokoltsov_2011} by Kolokoltsov, where the spatially non-homogeneous case is treated, i.e. the case in which the jump distribution depends on current particle position. To study the limiting behavior of such CTRWs, Kolokoltsov develops the theory of subordination of Markov processes by the hitting-time process, showing that this procedure leads to generalized fractional evolutions. Regarding the connection between scaling limits of CTRWs and Kolmogorov backward and forward equations we refer to \cite{baeumer2015fokker}.

The paper is organized as follows. We begin with a historical note on the subject of urn-scheme models based on a historically important paper \cite{markov1915problem} and a recent survey paper  \cite{jacobsen1996laplace} (heavily referring to \cite{Bernoulli_Laplace}), and a classical book \cite{Karlin_Taylor2}.  Section \ref{PD} gives the short overview of the general theory on Pearson diffusions, followed by Section \ref{FPD} on fractional Pearson diffusions, focusing on the non-heavy tailed cases. Finally, Section \ref{CTRWforFPD} is dedicated to generalizations of results from \cite{jacobsen1996laplace} and \cite{Karlin_Taylor2} and constructing the CTRWs converging to fractional Ornstein-Uhlenbeck, Cox-Ingersoll-Ross or Jacobi diffusions using the following steps:
\begin{enumerate}
 \item Obtaining the non-fractional Pearson diffusion $(X_{t}, \, t \geq 0)$ as the scaling limit of a suitably chosen or constructed  discrete time Markov chain.
\item Defining the corresponding CTRW process by taking the composition of the Markov chain defined in step 1 and the process $N_{t}=\max\{n \geq 0 \colon T_{n} \leq t\}$, modeling the number of jumps up to time $t \geq 0$.
\item  Proving that the composition converges to the corresponding fPD $\left(X(E_t),\,t\ge 0 \right)$, a composition of non-fractional Pearson diffusion with the inverse $(E_{t}, \, t \geq 0)$ of the $\beta$-stable subordinator with $\beta \in ( 0, 1 )$.
\end{enumerate}

\section{Historical roots} \label{history}

Many processes observed in science can be mathematically described by the urn-scheme models, where a discrete time Markov chain moves from one state to another. One of the most famous urn-scheme models is the Wright-Fisher model for gene mutations in a population under various assumptions. A rich class of the limiting processes could be obtained from it. One of the simplest urn-scheme models is the Bernoulli-Laplace urn model, named after D. Bernoulli and P. S. Laplace, the pioneers of probability theory, and later studied in depth by \cite{markov1915problem}. This model considers two urns, urn $A$ containing $j$ balls and urn $B$ containing $k$ balls. Of the total $j+k$ balls in two urns, suppose that $r$ balls are white, and $(j+k-r)$ are black. At time $n \in \mathbb{N}$ one ball is drawn randomly from urn $A$ and another from urn $B$. The ball drawn from urn $A$ is placed into urn $B$, the ball drawn from urn $B$ is placed into urn $A$. Let $X_n$ be the number of white balls in urn $B$ at time $n \in \N$. Then $(k-X_n)$ is the number of black balls in urn $B$, $(r-X_n)$ is the number of white balls in urn $A$ and $(j-r+X_n)$ is the number of black balls in urn $A$ at time $n \in \N$. Therefore, $(X_n, \, n \in \mathbb{N})$ is a homogeneous discrete time Markov chain with state space $S=\{\max{\{0, r-j\}}, \ldots, \min{\{k, r\}\}}$ and the following transition probabilities:
\begin{equation*}
P \left( X_{n+1} = y \vert X_{n} = x \right) = p_{x, y} = \left\{ \begin{array}{lcl} \displaystyle\frac{(r-x)(k-x)}{jk} & , & y = x+1 \\
                                                                                     \displaystyle\frac{(r-x)x+(j-r+x)(k-x)}{jk} & , & y = x \\
                                                                                     \displaystyle\frac{(j-r+x)x}{jk} & , & y = x-1 \\
                                                                                     0 & , & \rm{otherwise.}  \end{array} \right.
\end{equation*}

In his book \cite{Bernoulli_Laplace}, Laplace worked with a particular case of this model, in which each urn contains $n$ balls and also $n$ out of total $2n$ balls are black. In this setting he defined a discrete time Markov chain $\left( Z^{(n)}_{r}, \, r \geq 0 \right)$ with the state space $\{0, 1, 2, \ldots, n\}$, where $n$ is the number of balls in urn $A$ and $r$ is the number of draws, with transition probabilities
\begin{equation*}
p_{x,x+1}=\left(1-\frac{x}{n} \right)^2, \quad p_{x,x}= 2 \frac{x}{n}\left(1-\frac{x}{n}\right), \quad p_{x,x-1}=\left(\frac{x}{n} \right)^2 \quad \text{and $0$ otherwise}. \end{equation*}

Laplace was interested in finding the limiting distribution of this irreducible, aperiodic, reversible and ergodic  Markov chain with stationary/ergodic distribution
$$\pi = \{\pi_{0}, \cdots, \pi_{n} \}, \quad \pi_i = \frac{\binom {n} {i}\binom {n} {n-i}}{\binom {2n} {n}}.$$ By denoting $z_{x, \, r}$ the probability that there are $x$ white balls in urn $A$ after $r$ draws, he deduced the following partial second-order difference equation:
\begin{equation}\label{Laplace_diff_eq}
z_{x,\,r+1}=\left(\frac{x+1}{n} \right)^2 z_{x+1,\,r}+2 \frac{x}{n}\left(1-\frac{x}{n}\right) z_{x,\,r}+\left( 1-\frac{x-1}{n} \right)z_{x-1,\,r}.
\end{equation}

After that, instead of determining the generating function for $z$, he approximated the solution of the equation \eqref{Laplace_diff_eq} by introducing the new space variable $\mu$ and new time variable $r'$ and obtained the following relation: $$x=\frac{1}{2}(n+\mu \sqrt{n}), \quad r=nr'.$$

Then, he introduced the function $$U(\mu, \,r'):=z_{x, \, r}$$ of space and time and heuristically deduced the following relations describing the changes of state in the transformed Markov chain:
\begin{align*}
z_{x+1,\,r}&=z_{x,\,r}+\frac{\partial z_{x,\,r}}{\partial x}+\frac{1}{2}\frac{\partial^2 z_{x,\,r}}{\partial x^2}\\
z_{x-1,\,r}&=z_{x,\,r}-\frac{\partial z_{x,\,r}}{\partial x}+\frac{1}{2}\frac{\partial^2 z_{x,\,r}}{\partial x^2}\\
z_{x,\,r+1}&=z_{x,\,r}+\frac{\partial z_{x,\,r}}{\partial r}.
\end{align*}

By another purely heuristic argument, he claimed that function $U(\mu, \,r')$ satisfies the second-order differential equation
\begin{equation}\label{OU_KFE}
\frac{\partial U}{\partial r'}=-\frac{\partial }{\partial \mu}(-2 \mu U)+\frac{1}{2}\frac{\partial^2 }{\partial \mu^2}(2 U)=2U+2\mu \frac{\partial U}{\partial \mu}+ \frac{\partial^2 U}{\partial \mu^2},
\end{equation}
which is a special case of the Fokker-Planck or Kolmogorov forward equation defining the OU diffusion:
\begin{equation*}
        \frac{\partial p(x,t)}{\partial t}= -\frac{\partial}{\partial x}\left(\mu(x) p(x,t) \right)+\frac{1}{2}\frac{\partial^2}{\partial x^2}\left(
        \sigma^2(x) p(x,t) \right).
        \end{equation*}
In particular, equation \eqref{OU_KFE} is the governing equation for the OU diffusion with infinitesimal mean $\mu(x)=-2x$ and infinitesimal variance $\sigma^2(x)=2$. While rigorous proofs were not provided, this work had the first mention of the forward equation for the OU diffusion.

A century later, Markov \cite{markov1915problem} considered a more general model. In his scheme there are also two urns, urn $A$ containing $n$ balls and urn $B$ containing $n_{1}$ balls. Out of total $(n+n_1)$ balls, there are $(n+n_1)p$ white and $(n+n_1)q$ black balls ($0 < p < 1, q = 1 - p$). Similar to the Laplace's scheme, by denoting probability that there are $x$ white balls in the urn $A$ after $r$ draws by $z_{x, \, r}$, Markov obtained the following difference equation:
\begin{align}\label{difference_Markov}
z_{x,\,r+1}=&\frac{x+1}{n}\cdot\frac{n_1q-np+x+1}{n_1}\cdot z_{x+1,\,r}+\frac{n-x+1}{n}\cdot\frac{(n+n_1)p-x+1}{n_1}\cdot z_{x-1,\,r}\nonumber\\&+\left (\frac{x}{n}\cdot\frac{(n+n_1)p-x}{n_1}+\frac{n-x}{n}\cdot\frac{n_1q-np+x}{n_1} \right )\cdot z_{x,\,r}.
\end{align}

Next, Markov introduced new space variable $\mu$ and new time variable $\rho$ and obtained the following relation:
$$x=np+\mu \frac{1}{\Delta\mu},\,\,r \left( \frac{1}{n}+\frac{1}{n_1} \right )=2\rho, \,\,\text{where}\,\, \Delta \mu =\sqrt{\frac{n+n_1}{2 p q n n_1}},$$
demanding that the ratio of number of balls in urn $A$ and urn $B$ remains constant at all times, i.e. $n_1 \alpha = n$ for some $\alpha > 0$. Obviously, for $\alpha=1$ and $p = q = \frac{1}{2}$ this model reduces to the previously described Laplace's urn scheme. 

Finally, the difference equation \eqref{difference_Markov} was, again by pure heuristics, approximated by the corresponding Fokker-Planck equation. In order to do so, Markov defined the space and time dependent function $U(\mu, \rho):=z_{x, r}$, smooth enough for obtaining approximations
\begin{align*}
z_{x+1,\,r}&=U(\mu+\Delta \mu, \, \rho)\approx U+\Delta \mu\frac{\partial U}{\partial \mu}+\frac{1}{2}(\Delta\mu)^2\frac{\partial^2 U}{\partial \mu^2}\\
z_{x-1,\,r}&=U(\mu-\Delta \mu, \, \rho)\approx U-\Delta \mu\frac{\partial U}{\partial \mu}+\frac{1}{2}(\Delta\mu)^2\frac{\partial^2 U}{\partial \mu^2}\\
z_{x,\,r+1}&=U(\mu, \, \rho+\frac{1+\alpha}{2n}) \approx U+\frac{1+\alpha}{2n}\frac{\partial U}{\partial \rho},
\end{align*}
precise up to the order $o(n^{-1})$.

By combining these approximations with the new space and time transformations and putting it into \eqref{difference_Markov}, Markov obtained the second order differential equation
\begin{equation*}
\frac{\partial U}{\partial \rho}=2U+2\mu \frac{\partial U}{\partial \mu}+ \frac{\partial^2 U}{\partial \mu^2},
\end{equation*}
the Fokker-Planck equation for the OU diffusion with infinitesimal mean $\mu(x) =-2x$ and infinitesimal variance $\sigma^2(x) = 2$, completely coinciding with Laplace's result.

We now briefly discuss the Wright-Fisher urn scheme, named after S. Wright and R. Fisher. Wright-Fisher urn scheme is a model that describes gene mutations (in some genetic pool) over time, strongly influencing selection and sampling forces in the corresponding population. There are several different versions of this model, and we use the scheme described in \cite{Karlin_Taylor2}.

Suppose that in a population of size $n$  each individual is either of type $A$ or type $a$. Let $i$ be the number of $A$-types in the population. Therefore, the remaining $(n-i)$ population members are $a$-types.
The next generation of a population is produced depending on the influence of mutation, selection and sampling forces. Once born, individual of $A$-type can mutate in $a$-type with probability $\alpha$ and individual of $a$-type can mutate in $A$-type with probability $\beta$. Taking into account parental population comprised of $i$ $A$-types and $(n-i)$ $a$-types, the expected fraction of $A$-types after mutation is
\begin{equation*}
  \frac{i}{n}\left(1-\alpha \right)+\left(1-\frac{i}{n}\right)\beta,
\end{equation*}
while the expected fraction of $a$-types is
\begin{equation*}
  \frac{i}{n}\alpha +\left(1-\frac{i}{n}\right)\left(1-\beta \right).
\end{equation*}

Survival ability of each type is modeled by parameter $s$ so that the ratio of $A$-types over $a$-types is equal to $1+s$, meaning that $A$-type is selectively superior to $a$-type. Then the expected fraction of mature $A$-types before reproduction is
\begin{equation}\label{WF_pi}
p_i=\frac{\left(1+s\right)\left[i\left(1-\alpha \right)+\left(n-i\right)\beta\right]}{\left(1+s\right)\left[i\left(1-\alpha \right)+\left(n-i\right)\beta\right]+\left[i\alpha +\left(n-i\right)\left(1-\beta \right)\right]}.
\end{equation}
The last assumption of this model is that the composition of the next generation is determined through $n$ binomial trials, where the probability of producing an $A$-type in each trial is $p_i$. This model, tracking the number of $A$-types in population over time, can be described by the discrete-time Markov chain $(G^{n}_{r}, \, r \in \mathbb{N}_{0})$ with the state space $ \{0, 1, 2, \ldots, n \}$ and transition probabilities
\begin{equation}\label{WF_pij}
p_{ij} = \binom{n}{j} p^{j}_{i} (1-p_i)^{n-j}.
\end{equation}

This model is parametrized by $\alpha $, $\beta$,  and $s \in [0,1]$. Depending on their values, different limiting diffusions could be obtained. In Section \ref{CTRWforFPD_WFUS} of this paper, we present two different settings that lead to generally parametrized Jacobi and CIR diffusions. The procedures of constructing these diffusions as limits of some suitably selected discrete-time Markov chain (urn scheme) are based on examples given in \cite[p. 176-183]{Karlin_Taylor2}, where only heuristic arguments are given for only special cases of the limiting Jacobi and CIR diffusions.

From this review of history, we see that the connection between discrete-time Markov chains  in  urn-scheme models and some limiting continuous time stochastic processes has been brought up in the literature. Today, the conjectures of Laplace and Markov could be rigorously proved by modern techniques of analysis and probability theory, involving the convergence of evolution operators of discrete time Markov chains. In this paper we derive such connections between several urn-scheme models and the corresponding diffusions. More precisely, for appropriately chosen discrete time Markov chains we derive the corresponding limiting diffusions, then define the CTRWs and their fractional non-heavy tailed Pearson diffusion limits.

\section{Pearson diffusions}\label{PD}

In 1931 Kolmogorov observed that the differential equation for the invariant
density $\mathfrak{m}(x),x\in \mathbb{R}$, of the classical Markovian
diffusions satisfying
\[
\mathfrak{m}^{\prime }(x)/\mathfrak{m}(x)=[a(x)-b^{\prime
}(x)]/[b(x)]=[(a_{1}-2b_{2})x+(a_{0}-b_{1})]/[b_{2}x^{2}+b_{1}x+b_{0}],
\]%
in case of a linear drift $a(x)=a_{1}x+a_{0}=-\theta (x-\mu )$
and the quadratic squared diffusion coefficient $\sigma ^{2}(x)=2\theta
b(x), $ becomes the famous Pearson equation introduced by Pearson in 1914 to unify
important families of distributions (see \cite{PearsonTables_1914}). The family of Pearson distributions is
categorized into six well known and widely applied parametric subfamilies:
normal, gamma, beta, Fisher-Snedecor, reciprocal gamma and Student
distributions. The latter three distributions are heavy-tailed, while the former three are not.
Pearson diffusion is defined as the unique strong solution of the following
stochastic differential equation (SDE):
\[
\label{SDE} dX_{t}=-\theta (X_{t}-\mu )dt+\sqrt{2\theta
(b_{2}X_{t}^{2}+b_{1}X_{t}+b_{0})}dW_{t},\,\,t\geq 0,
\]%
where $\mu \in \mathbb{R}$ is the stationary mean, $\theta >0$ is the
scaling of time determining the speed of the mean reversion, and $b_0$, $b_1$
and $b_2$ must be such that the square root is well defined when $X_{t}$ is in
the state space $(l,L)$.

Pearson diffusions could be categorized into six subfamilies (see \cite{LeonenkoMeerschaertSikorskii_FPD_2013}),
according to the degree of the polynomial $\sigma ^{2}(x)$ and, in the
quadratic case, according to the sign of its leading coefficient $b_2$ and the
sign of its discriminant $\Delta =b_{1}^{2}-4b_{0}b_{2}$. The six types are:

\begin{itemize}
\item constant $b(x)$ - Ornstein-Uhlenbeck (OU) process, characterized by a
normal stationary distribution,

\item linear $b(x)$ - Cox-Ingersol-Ross (CIR) process, characterized by a
gamma stationary distribution,

\item quadratic $b(x)$ with $b_{2}<0$ - Jacobi (JC) diffusion, characterized
by a beta stationary distribution,

\item quadratic $b(x)$ with $b_{2}>0$ and $\Delta (b)>0$ - Fisher-Snedecor
(FS) diffusion, characterized by Fisher-Snedecor stationary distribution,

\item quadratic $b(x)$ with $b_{2}>0$ and $\Delta (b)=0$ - reciprocal gamma
(RG) diffusion, characterized by reciprocal gamma stationary distribution,

\item quadratic $b(x)$ with $b_{2}>0$ and $\Delta (b)<0$ - Student (ST)
diffusion, characterized by Student stationary distribution.
\end{itemize}

More details on the first three types of Pearson diffusions with non-heavy
tailed stationary distributions are found in the classical book \cite{Karlin_Taylor2}.

Time evolution of the diffusion is described by the Kolmogorov forward
(Fokker-Planck) and backward partial differential equations (PDEs) for the
transition density $p(x,t;y,s)=\frac{d}{dx}\mathit{P}(X_{t}\leq x|X_{s}=y)$.
We consider only time-homogeneous diffusions for which $%
p(x,t;y,s)=p(x,t-s;y,0)$ for $t>s$ and thus we write $p(x,t;y)=\frac{d}{dx}%
\mathit{P}(X_{t}\leq x|X_{0}=y).$ The transition density $p(x,t;y)$ solves
the following Cauchy problem with space-varying polynomial coefficients:
\[
\label{backward} \frac{\partial p(x,t;y)}{\partial t}=\mathcal{G}%
p(x,t;y),\quad p(x,0;y)=g(y)
\]
with the point-source initial condition, where $\mathcal{G}g(y)=\left( \mu
(y)\frac{\partial }{\partial y}+\frac{\sigma ^{2}(y)}{2}\frac{\partial ^{2}}{%
\partial y^{2}}\right) g(y)$ is the infinitesimal generator of the
diffusion, playing a key role in its analytical properties. The generators
of non-heavy tailed OU, CIR, and Jacobi diffusions considered in this paper have purely discrete
spectra, consisting of infinitely many simple eigenvalues $(\lambda
_{n},\,n\in \mathbb{N})$ (see \cite{LeonenkoMeerschaertSikorskii_FPD_2013}). Corresponding orthonormal
eigenfunctions $(Q_{n}(x),n\in \mathbb{N})$ are classical systems of
orthogonal polynomials: Hermite polynomials for the OU process, Laguerre
polynomials for the CIR process and Jacobi polynomials for the Jacobi diffusion.

 \section{Fractional Pearson diffusions (fPDs)}\label{FPD}

Let $(X_{1}(t),\,t\geq 0)$ be a Pearson diffusion. The fPD $(X_{\beta
}(t),\,t\geq 0)$ is defined via a non-Markovian time-change $E(t)$
independent of $X_{1}(t)$:
\[
X_{\beta }(t):=X_{1}(E_{t}),\quad t\geq 0,
\]%
where $E_{t}=\inf \{x>0:D_{x}>t\}$ is the inverse of the standard $\beta $%
-stable L\'{e}vy subordinator $(D_{t},\,t\geq 0)$, $0<\beta <1$, with the
Laplace transform $\mathit{E}[e^{-sD_{t}}]=\exp \{-ts^{\beta }\}, s\geq
0$. Since $E_{t}$ rests for periods of time with non-exponential
distribution, the process $(X_{\beta }(t),\,t\geq 0)$ is not a Markov
process.

We say that the non-Markovian process $X_{\beta }(t)$ has a transition
density $p_{\beta }(x,t;y)$ if
\[
\mathit{P}(X_{\beta }(t)\in B|X_{\beta }(0)=y)=\int_{B}p_{\beta }(x,t;y)dx
\]%
for any Borel subset $B$ of $(l,L)$.

The governing equations for the fPDs are time-fractional forward and
backward Kolmogorov equations, with the time-fractional derivative
(regularized non-local operator) defined in the Caputo sense (see \cite{MeerschaertSikorskii_2011}):
\[
\frac{\partial ^{{}\beta }u}{\partial t^{{}\beta }}=\left\{
\begin{array}{ll}
\frac{\partial u}{\partial t}\left( t,x\right) , & \text{if }\beta =1 \\
\frac{1}{\Gamma \left( 1-\beta \right) }\frac{\partial }{\partial t}%
\int_{0}^{t}\left( t-\tau \right) ^{-\beta }u\left( \tau ,x\right) d\tau -%
\frac{u(0,x)}{t^{\beta }}, & \text{if}\ \beta \in (0,1).%
\end{array}%
\right.
\]

Using spectral methods, the representations for the transition densities of
the non-heavy-tailed fPDs (OU, CIR, Jacobi) were obtained in \cite{LeonenkoMeerschaertSikorskii_FPD_2013}.
Namely, it has been shown that the series
\begin{equation}
p_{\beta }(x,t;y)=\mathbf{m}(x)\sum_{n=0}^{\infty }\mathcal{E}_{\beta
}\left( -\lambda _{n}t^{\beta }\right) Q_{n}(y)Q_{n}(x)  \label{eq16}
\end{equation}%
converges for fixed $t>0$, $x,\,y\in (l,L)$, where $\mathcal{E}_{\beta
}(-z)=\sum\limits_{j=0}^{\infty }(-z)^{j}/\Gamma (1+\beta j),z\geq 0,$ is
the Mittag-Leffler function. The series \eqref{eq16} satisfies $p_{\beta
}(x,0;y)=\delta (x-y)$. These spectral representations were then used to
obtain the explicit strong solutions of the corresponding fractional Cauchy
problems for both backward and forward equations. The solutions were given as series that converge and satisfy the fractional partial differential equations pointwise. The results obtained in this paper can be used to implement another method of obtaining the solutions for the fractional Cauchy problems with space-varying polynomial coefficients appearing in the generator. Namely, by simulating the Markov chains and waiting times, particle tracking method can provide the means for numerically evaluating the strong solutions.

\section{Transition operators of the discrete time Markov chains} \label{CTRWforFPD}

To implement the diffusion approximation of discrete-time Markov chains, we will use transition kernels and transition operators. Let $\mu$ be an arbitrary probability kernel on a measurable space $(S, \mathcal{S})$. The associated transition operator $T$ is defined as
\begin{equation}
\label{TransOp}
  Tf(x) = (Tf)(x) = \int \mu(x, dy) f(y), \quad x \in S,
\end{equation}
where $f \colon S \to \mathbb{R}$ is assumed to be measurable and either bounded or nonnegative. From the approximation of  $f$ by simple functions and the monotone convergence argument, it follows
that $Tf$ is again a measurable function on $S$. Furthermore, $T$ is a positive contraction operator: $0 \leq f \leq 1$ implies that $0 \leq Tf \leq 1$. For more details on transition operators and their importance to the study of Markov processes, we refer to \cite{Kallenberg_2002}, Chapter 19.

Theorems 19.25 and 19.28 from \cite{Kallenberg_2002} are crucial for the technique used for obtaining the non-fractional Pearson diffusion as the scaling limit of a suitably chosen Markov chain with known transition operator. Markov chain will be defined in terms of its state space and  transition probabilities (the integral in \eqref{TransOp} then becomes a sum). A few additional definitions are given for clarity before the statement of the theorem.

By $\mathbb{D}(S)$ we denote the space of right continuous functions with left limits defined on $\R^{+}$ with values in $S$ endowed with Skorokhod $J_1$ topology.
Consider the Banach space of bounded continuous functions on space $S$, where  $S=\R$ in the OU case, $S=[0,1]$ in the Jacobi case and $S=[0,+\infty)$ in  the CIR case, with the supremum norm.
For a closed operator $\mathcal{A}$ with domain $\mathcal{D}$, a core for $\mathcal{A}$ is a linear subspace $D \subset \mathcal{D}$ such that the restriction $\mathcal{A} \vert_{D}$ has closure $\mathcal{A}$. In that case, $\mathcal{A}$ is clearly uniquely determined by its restriction $\mathcal{A} \vert_{D}$.

Theorems 1.6 and 2.1 from \cite[Section 8]{ethier2009markov} give sufficient conditions for $C^{\infty}_c(S)$ being a core of the diffusion infinitesimal generator. In particular, Jacobi diffusion satisfies conditions of Theorem 1.6, while other Pearson diffusions satisfy conditions of Theorem 2.1, which means $C^{\infty}_c(S)$ is a core for Pearson diffusions. Therefore $C^{3}_c(S)$, as a broader space, can be referred to as the core as well.

\begin{theorem}\label{chain_aprox}
Let $(Y^{(n)}, \, n \in \mathbb{N})$ be a sequence of discrete-time Markov chains on $S$ with transition operators $(U_{n}, \, n \in \mathbb{N})$. Consider a Feller process $X$ on $S$ with semigroup $T_t$ and generator $\mathcal{A}$. Fix a core $D$ for the generator $\mathcal{A}$, and assume that $(h_n, \, n \in \mathbb{N})$ is the sequence of positive reals tending to zero as $n \to \infty$. Let
\begin{equation*}
  A_n = h^{-1}_n(U_n-I), \quad T_{n,t} = U^{\lfloor t/h_n \rfloor}_{n}, \quad X^{n}_{t} = Y^{n}_{\lfloor t/h_n \rfloor}.
\end{equation*}
Then the following statements are equivalent:
\begin{itemize}
\item[a)] If $f \in D$, there exist some $f_n \in Dom(A_n)$ with $f_n \to f$ and $A_n f_n \to \mathcal{A}f$ as $n \to \infty$
\item[b)] $T_{n,t} \to T_t$ strongly for each $t>0$
\item[c)] $T_{n,t}f_n \to T_t f$ for each $f \in C_0$, uniformly for bounded $t>0$
\item[d)] if $X^{(n)}_{0}\Rightarrow X_{0}$ in $S$, then $X^{n}\Rightarrow X$ in the Skorokhod space $\mathbb{D}(S)$ with the $J_{1}$ topology.
\end{itemize}
\end{theorem}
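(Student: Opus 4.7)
The plan is to recognize that this statement is essentially the specialization of the classical Trotter--Kurtz type approximation theorem for Feller semigroups (Kallenberg, Theorems 19.25 and 19.28) to the discrete-time Markov chain setting, and so the proof would be organized as a cycle of implications invoking those results. I would establish the equivalences in the order (a) $\Leftrightarrow$ (b), (b) $\Leftrightarrow$ (c), (c) $\Leftrightarrow$ (d), keeping in mind that the operators $T_{n,t} = U_n^{\lfloor t/h_n\rfloor} = (I + h_n A_n)^{\lfloor t/h_n \rfloor}$ are contraction semigroups on the Banach space of bounded continuous functions on $S$, and that $T_t$ is a Feller semigroup by assumption.

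For (a) $\Rightarrow$ (b), I would appeal to the Hille--Yosida-type approximation theorem: convergence of the discrete generators $A_n$ to $\mathcal{A}$ on a core $D$ lifts to convergence of the resolvents $(\lambda - A_n)^{-1} \to (\lambda - \mathcal{A})^{-1}$ for $\lambda > 0$, and hence to strong convergence of the associated semigroups, via the standard exponential formula applied to $(I + h_n A_n)^{\lfloor t/h_n \rfloor}$. The reverse implication (b) $\Rightarrow$ (a) uses the fact that $D$ is a core: strong convergence of the semigroups yields convergence of the generators on their common domain, and the specific choice $f_n = f$ (or $f_n = (\lambda - A_n)^{-1}(\lambda - \mathcal{A})f$) realizes the required approximation. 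This is precisely the content of Kallenberg's Theorem 19.25.

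The equivalence (b) $\Leftrightarrow$ (c) is then a standard equicontinuity argument: since both $T_{n,t}$ and $T_t$ are contractions, strong convergence on the dense set $D$ extends to all of $C_0$ uniformly on bounded time intervals, via a straightforward $3\varepsilon$ estimate that splits $T_{n,t}f_n - T_t f$ into the terms $T_{n,t}(f_n - f)$, $T_{n,t}f - T_t f$, and uses uniform continuity of $t \mapsto T_t f$ together with the contraction property to handle the discretization. The implication (c) $\Rightarrow$ (b) is trivial by taking $f_n \equiv f$.

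Finally, (c) $\Leftrightarrow$ (d) is Kallenberg's Theorem 19.28: uniform semigroup convergence on $C_0$ gives convergence of finite-dimensional distributions of $X^n$ to those of $X$ when the initial conditions converge weakly, and Feller properties of the limit together with the Markov structure of the approximating chains supply the tightness in $\mathbb{D}(S)$ needed to upgrade this to weak convergence in the $J_1$ Skorokhod topology. The main obstacle to be careful about will be verifying that the assumptions of Kallenberg's theorems are met in each of the three cases (OU on $\mathbb{R}$, CIR on $[0,\infty)$, Jacobi on $[0,1]$) --- in particular, that $D = C_c^3(S)$ is indeed a core for the generator of each Pearson diffusion, which is precisely what is secured by the preceding discussion citing Ethier--Kurtz Theorems 1.6 and 2.1; once the core property is in hand, each equivalence reduces to quoting the corresponding general result.
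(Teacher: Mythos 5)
Your proposal is correct and takes essentially the same route as the paper: the paper proves this statement simply by citing Kallenberg's Theorems 19.25 and 19.28 (the discrete-time Trotter--Kurtz approximation scheme), which is exactly the machinery you invoke, and your sketch of the internal implications matches the standard proof of those results. The only superfluous element is your final paragraph on verifying the core property for the specific Pearson diffusions --- that belongs to the applications (Theorems \ref{Thm_OUDiffusion}--\ref{Thm_CIRDiffusion}), not to this abstract statement, which simply fixes a core $D$ as a hypothesis.
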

The proof could be found in \cite[Theorem 19.28, page 387]{Kallenberg_2002}.

\section{Correlated CTRW for the OU process based on the Laplace-Bernoulli urn scheme} \label{CTRWforFPD_BLUS}

In this section, we define the Bernoulli-Laplace urn-scheme model, but compared to early ideas of Laplace and Markov summarized in Section \ref{history}, with crucial changes in space and time transformations in order to obtain more general limiting process, i.e. the OU diffusion with general parameters. The urn scheme consists of two urns, $A$ and $B$, each containing $n$ balls. Furthermore, $n$ out of total $2n$ balls are black. At each step one ball is randomly chosen from each urn. The ball drawn from urn $A$ is then placed into urn $B$ and the ball drawn from urn $B$ is placed into urn $A$. We are interested in the number of white balls in the urn $A$ after $r \in \mathbb{N}$ draws.

Let for each $n \in \mathbb{N}$, $(Z^{(n)}_{r}, \, r \in \mathbb{N}_{0})$ be the Markov chain with the state space $ \{0, 1, 2, \ldots, n\}$, where $n$ is the number of balls in urn $A$, $r$ is the number of draws and  $Z^{(n)}_{r}$ is the number of white balls in the urn $A$ after $r$ draws. The transition probabilities for this Markov chain are as follows:
\begin{equation} \label{trans}
p_{i, i+1} = \left(1 - \frac{i}{n} \right)^2, \quad p_{i, i} = 2 \frac{i}{n} \left(1 - \frac{i}{n}\right), \quad p_{i, i-1} = \left(\frac{i}{n} \right)^2, \quad \text{$0$ otherwise}.
\end{equation}

We assume that the initial state of the chain is given by $Z^{(n)}_{0} = \lfloor \frac{1}{2} (n+(ax+b) \sqrt{n}) \rfloor$, where $a \neq 0$ and $b$  are fixed parameters with values in $\R$ and $x\in \R$. Now we introduce the space variable (i.e., state) transformation
$$i' = \frac{1}{a\sqrt{n}} \left ( 2i - n - b\sqrt{n} \right ), \quad i \in \{0, 1, 2, \ldots, n\},$$
to obtain a new Markov chain $(H^{(n)}_{r}, \, r \in \mathbb{N}_{0})$, where
\begin{equation}\label{new_chain_OU}
H^{(n)}_{r} = \frac{1}{a \sqrt{n}} \left ( 2Z^{(n)}_{r} - n - b \sqrt{n} \right).
\end{equation}

Let us denote $x_n=\lfloor \frac{1}{2} (n+(ax+b)\sqrt{n}) \rfloor$ and $x'_n= \frac{1}{a\sqrt{n}}\left ( 2 x_n - n -b \sqrt{n} \right ) $. Since our starting Markov chain $(Z^{(n)}_{r}, \, r \in \mathbb{N}_{0})$ is homogenous, using \eqref{trans} we obtain the transition operator of the Markov chain $(H^{(n)}_{r}, \, r \in \mathbb{N}_{0})$:
\begin{align}\label{trans_op_OU}
T_n f(x'_n)&=\left(1-\frac{1}{2}\left(1+\frac{ax'_n+b}{\sqrt{n}}\right) \right)^2 f\left(x_n'+\frac{2}{a\sqrt{n}}\right)+  \left(1+\frac{ax'_n+b}{\sqrt{n}}\right)\left(1-\frac{1}{2}\left(1+\frac{ax'_n+b}{\sqrt{n}}\right)\right)f(x'_n)\nonumber\\
&+\left(\frac{1}{2}\left(1+\frac{ax'_n+b}{\sqrt{n}}\right ) \right)^2 f\left(x'_n-\frac{2}{a\sqrt{n}}\right)\nonumber\\
&=\left(1-\frac{x_n}{n} \right)^2 f\left(\frac{1}{a\sqrt{n}}\left ( 2 (x_n+1) - n -b \sqrt{n} \right )\right)+ 2 \frac{x_n}{n}\left(1-\frac{x_n}{n}\right)f\left(\frac{1}{a\sqrt{n}}\left ( 2 x_n - n -b \sqrt{n} \right )\right)\nonumber\\
&+\left(\frac{x_n}{n} \right)^2 f\left(\frac{1}{a\sqrt{n}}\left ( 2 (x_n-1) - n -b \sqrt{n} \right )\right).
\end{align}

For the application of   Theorem \ref{chain_aprox}, operators $A_{n}$ are defined as follows:
\begin{equation}\label{CTRW_gen_OU}
A_n: = \frac{\theta}{2} n(T_n-I),\quad \theta >0, \quad f_n \in Dom(A_n), \quad f_n(x):=f(x'_n).
\end{equation}
where $f \in C_{c}^{3}(\R).$ \newline
The continuous-time process $(X^{(n)}_t, \, t \geq 0)$ is obtained by the following scaling of time in $(H^{(n)}_{r}, \, r \in \mathbb{N}_{0})$, for each $n \in \mathbb{N}$:
\begin{equation}\label{TimeChange_OU}
X^{(n)}_t := H^{(n)}_{\lfloor \frac{\theta}{2} nt \rfloor}, \,\, \theta >0.
\end{equation}

Let $X = (X_t, \, t \geq 0)$ be the OU diffusion, i.e., the solution of the SDE
\begin{equation*}
dX_t= \mu(X_t)dt+\sigma(X_t)dW(t), \quad t \geq 0,
\end{equation*}
where $(W(t), t\geq 0)$ is the standard Brownian motion. The drift $\mu(x)$ and diffusion parameter $\sigma^2(x)$ are given by
$$\mu(x)=-\tau \left (x-\mu \right ), \quad \sigma(x)=\sqrt{2\tau\sigma^2}, \quad \tau>0, \quad \mu \in \R, \quad \sigma \in \R,$$
and the infinitesimal generator is defined by the following expression:
\begin{equation}\label{OUgenerator}
\mathcal{A}f(x)=\left [\mu(x)\frac{\partial}{\partial x}+\frac{\sigma^2(x)}{2}\frac{\partial^2}{\partial x^2}\right ]f(x)=\left [-\tau \left (x-\mu \right )\frac{\partial}{\partial x}+\tau \sigma^2\frac{\partial^2}{\partial x^2}\right]f(x),\quad f \in C^{3}_{c}(\R).
\end{equation}

The next theorem established that the OU diffusion is the limiting process of the sequence of rescaled discrete time Markov chains given by \eqref{new_chain_OU}.

\begin{theorem}\label{Thm_OUDiffusion}
Let $(H^{(n)}_{r}, \, r \in \mathbb{N}_{0})$ for each $n \in \mathbb{N}$,  be the Markov chain defined by \eqref{new_chain_OU} with transition operator \eqref{trans_op_OU}. Let $(X^{(n)}_t, \, t \geq 0)$ for each $n \in \N$, be the time-changed process corresponding to the Markov chain $(H^{(n)}_{r}, \, r \in \mathbb{N}_{0})$, with the time-change  \eqref{TimeChange_OU}. Let operators $(A_n, \, n \in \N)$ be defined by \eqref{CTRW_gen_OU}.
Then
$$X^{n} \Rightarrow X \,\,\text{in}\,\, \mathbb{D}(\R),$$
where  $X = (X_t, \, t \geq 0)$ is the Ornstein-Uhlenbeck diffusion with infinitesimal generator $\mathcal{A}$ given by \eqref{OUgenerator} and parameters $\tau=\theta$, $\mu=-\frac{b}{a}$ and $\sigma=\frac{1}{a\sqrt{2}}$.
\end{theorem}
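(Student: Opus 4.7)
The strategy is to apply Theorem~\ref{chain_aprox} with core $D=C_c^3(\R)$ of the OU generator $\mathcal{A}$. For $f\in C_c^3(\R)$, take the natural approximants $f_n(x):=f(x'_n)$ prescribed in \eqref{CTRW_gen_OU}. Uniform convergence $f_n\to f$ is immediate since $x\mapsto x'_n$ is a lattice projection with spacing $2/(a\sqrt n)$ and $f$ is uniformly continuous. The deterministic initial condition $X^{(n)}_0=x'_n\to x$ trivially gives $X^{(n)}_0\Rightarrow X_0$. Thus the entire work consists in verifying $A_n f_n\to\mathcal{A}f$ in the supremum norm; condition~(a) will then yield (d) and conclude the argument.

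The computation is a third-order Taylor expansion of \eqref{trans_op_OU} around $x'_n$ with step $h_n:=2/(a\sqrt n)$. Denoting the up/down probabilities by $p_+=(1-x_n/n)^2$ and $p_-=(x_n/n)^2$, one writes
\begin{equation*}
T_n f(x'_n)-f(x'_n)=(p_+-p_-)h_n\,f'(x'_n)+\tfrac12(p_++p_-)h_n^2\,f''(x'_n)+R_n,
\end{equation*}
with $|R_n|\le\tfrac16\|f'''\|_\infty h_n^3$. From $x_n=\lfloor\tfrac12(n+(ax+b)\sqrt n)\rfloor$ one extracts $x_n/n=\tfrac12+(ax+b)/(2\sqrt n)+O(1/n)$, whence $p_+-p_-=1-2x_n/n=-(ax+b)/\sqrt n+O(1/n)$ and $p_++p_-=\tfrac12+O(1/n)$. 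Multiplying by the scaling $\theta n/2$, the first term tends to $-\theta(x+b/a)f'(x)$ and the second to $(\theta/(2a^2))f''(x)$. Setting $\mu=-b/a$, $\tau=\theta$ and $\sigma=1/(a\sqrt 2)$, this reproduces exactly \eqref{OUgenerator}. The remainder contribution is bounded by $(\theta n/2)|R_n|=O(n^{-1/2})$ uniformly, so vanishes.

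The main technical subtlety to watch is that the leading drift contribution is of order $1/\sqrt n$ and is amplified by the factor $n$ present in $A_n$, so any asymptotic error in $x_n/n$ must be tracked to order $o(1/\sqrt n)$. The floor function contributes only an $O(1/n)$ perturbation to $x_n/n$, which after multiplication by $h_n$ and $\theta n/2$ yields an innocuous $O(1/\sqrt n)$ residual. Uniform convergence on $\R$ is automatic: outside a small enlargement of $\mathrm{supp}(f)$ all three translates of $f$ in $T_n f(x'_n)$ vanish, while on the support the uniform bound on $f'''$ controls $R_n$ globally. This verifies condition~(a) of Theorem~\ref{chain_aprox}, so condition~(d) delivers $X^{(n)}\Rightarrow X$ in $\mathbb{D}(\R)$.
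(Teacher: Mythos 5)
Your proposal is correct and follows essentially the same route as the paper: verifying condition (a) of Theorem \ref{chain_aprox} by Taylor-expanding the three-point transition operator and then invoking the equivalence of (a) and (d). Your bookkeeping --- tracking the floor-function perturbation of $x_n/n$ to order $o(1/\sqrt n)$, bounding the third-order remainder by $\tfrac16\|f'''\|_\infty h_n^3$, and using compact support to get uniformity over $\R$ --- is if anything slightly more explicit than the paper's, which writes $A_n f$ at the continuous argument $x$ as first and second difference quotients and asserts uniform convergence from $f\in C^3_c(\R)$.
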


\begin{proof}
In the setup of the Theorem \ref{chain_aprox}, we will first prove that statement $a)$ of this theorem is valid for $A_n$ defined by \eqref{CTRW_gen_OU} and $\mathcal{A}$ defined by \eqref{OUgenerator}. Then we will use the equivalence of statements $a)$ and $d)$ to obtain convergence $X^{n} \Rightarrow X$ in $\mathbb{D}(\R)$ under assumption $X^{(n)}_{0}\Rightarrow X_{0}$, $n \to \infty$.

Therefore, first we prove the validity of the statement $a)$ from Theorem \ref{chain_aprox} in our setting:

$$||f_n-f||_{\infty}=\underset{x \in \R}{sup}\, |f_n(x)-f(x)|=\underset{x \in \R}{sup}\, |f(x'_{n})-f(x)|$$
and since by the definition of function $\lfloor \cdot \rfloor$
$$a x'_n \leq a x \le a x'_n +\frac{2}{\sqrt{n}}$$
we have
$$x'_n \rightarrow x, \,\, n \rightarrow \infty,$$
so using this we obtain
$$||f_n-f||_{\infty} \rightarrow 0, \,\, n \rightarrow \infty$$

and
$$||A_n f_n-\mathcal{A} f||_{\infty}=\underset{x \in \R}{sup} \,|A_n f_n(x)-\mathcal{A} f(x)|=\underset{x \in \R}{sup}\, |A_n f(x'_{n})-\mathcal{A} f(x)|.$$

By the triangle inequality we have
\begin{equation} \label{OU_gen_conver}
||A_n f_n-\mathcal{A} f||_{\infty} \leq \underset{x \in \R}{sup}\, |A_n f(x'_{n})-A_n f(x)| + \underset{x \in \R}{sup}\, |A_n f(x)-\mathcal{A} f(x)|.
\end{equation}

Now it follows that
\begin{align*}
A_n f(x)&=\frac{\theta}{2} n\left (T_n f(x)-f(x)\right)\\
        &=\frac{\theta}{2} n \left(1-\frac{1}{2}\left(1+\frac{ax+b}{\sqrt{n}}\right) \right)^2 f\left(x+\frac{2}{a\sqrt{n}}\right)+ \frac{\theta}{2} n\left(1+\frac{ax+b}{\sqrt{n}}\right)\left(1-\frac{1}{2}\left(1+\frac{ax+b}{\sqrt{n}}\right)\right)f\left(x\right)\\
        &+ \frac{\theta}{2} n\left(\frac{1}{2}\left(1+\frac{ax+b}{\sqrt{n}}\right ) \right)^2 f\left(x-\frac{2}{a\sqrt{n}}\right) - \frac{\theta}{2} n f\left(x\right)\\
        &=\frac{\theta}{2} n\left ( f\left(x+\frac{2}{a\sqrt{n}} \right)-f\left(x\right) \right )-\frac{\theta}{2} n\left (1+\frac{ax+b}{\sqrt{n}} \right )\left( f\left(x+\frac{2}{a\sqrt{n}}\right)-f\left ( x \right )\right)\\
        &+ \frac{\theta n}{8}\left (\left(1+\frac{ax+b}{\sqrt{n}}\right)\right )^2\left ( f\left(x+\frac{2}{a\sqrt{n}}\right)-2f\left(x\right)+f\left(x-\frac{2}{a\sqrt{n}}\right)\right )\\
        &= -\theta\left(x+\frac{b}{a}\right)\frac{\left ( f\left(x+\frac{2}{a\sqrt{n}} \right)-f\left(x\right) \right )}{\frac{2}{a\sqrt{n}}}+\frac{\theta}{2a^2}\left (\left (1+\frac{ax+b}{\sqrt{n}} \right )  \right )^2\frac{\left ( f\left(x+\frac{2}{a\sqrt{n}}\right)-2f\left(x\right)+f\left(x-\frac{2}{a\sqrt{n}}\right)  \right )}{\left (\frac{2}{a\sqrt{n}}\right )^2}\\
        &\longrightarrow -\theta\left(x+\frac{b}{a}\right)f'\left(x\right)+\frac{1}{2}\frac{\theta}{a^2}f''\left( x\right), \,\,\, n \to \infty.
          \end{align*}

Comparing the above limit with \eqref{OUgenerator} we see that
\begin{equation}\label{Ou_gener_limit}
A_n f(x) \longrightarrow \mathcal{A} f(x), \,\,\, n \to \infty
\end{equation}
with $\tau=\theta$, $\mu=-\frac{b}{a}$ and $\sigma=\frac{1}{a\sqrt{2}}$.

Since the function $f$ is in the space $C^{3}_{c}(\R)$, the above convergence holds uniformly. Furthermore $$x'_n \rightarrow x, \,\, n \rightarrow \infty,$$ so we have
$$\underset{x \in \R}{sup}\, |A_n f(x'_{n})-A_n f(x)| \rightarrow 0, \,\,\, n \to \infty.$$

Using the above together with \eqref{Ou_gener_limit} in \eqref{OU_gen_conver} we finally obtain
$$||A_n f_n-\mathcal{A} f||_{\infty} \rightarrow 0, \quad n \to \infty,$$
and since  $$X^{(n)}_{0}\Rightarrow X_{0}, \quad n \to \infty \quad \iff \quad x'_n \to x, \quad n \to \infty,$$
by Theorem \ref{chain_aprox} we obtain $X^{n} \Rightarrow X$ in $\mathbb{D}(\R)$, where $X$ is the generally parametrized OU diffusion.
\end{proof}

\section{Correlated CTRWs for non-heavy-tailed Pearson diffusions based on the Wright-Fisher model} \label{CTRWforFPD_WFUS}

In this section we present two different versions of the Wright-Fisher model that lead to generally parametrized Jacobi and CIR diffusions.

\subsection{Jacobi diffusion} \label{FracJacobiDiff}
Jacobi diffusion $Y = \left(Y_t, \,t \geq 0 \right)$ is defined as the solution of the SDE
\begin{equation*}
dY_t = -\gamma (Y_t - \mu) dt + \sqrt{2 \gamma \delta Y_t (1 - Y_t)} dW_t, \quad t \geq 0, \quad \mu \in (0,1), \quad \gamma, \delta > 0,
\end{equation*}
with the infinitesimal generator
\begin{equation}\label{Jacgenerator}
\mathcal{A} f(y) = -\gamma (y - \mu) f'(y) + \frac{1}{2} (2\gamma \delta) y(1-y) f''(y), \quad f \in C^{3}_{c}([0,1]).
\end{equation}

In this case we assume that there are no survival abilities and that probability of mutation of each type ($A$ to $a$ and $a$ to $A$) is proportional to the size of the population, i.e.
\begin{equation*}
  \alpha = \frac{a}{n}, \quad \beta = \frac{b}{n}, \quad s = 0, \quad a, b > 0.
\end{equation*}

Therefore, the expected fraction of mature $A$-types \eqref{WF_pi} becomes
\begin{equation}\label{Jac_pi}
  p_i = \frac{i}{n} \left(1 - \frac{a}{n} \right) + \left(1 - \frac{i}{n} \right) \frac{b}{n}.
\end{equation}

For each $n \in \mathbb{N}$ we define a new Markov chain $(H^{(n)}_{r}, \, r \in \mathbb{N}_{0})$ as follows:
\begin{equation}\label{new_chain_Jac}
H^{(n)}_{r} = \frac{1}{n} G^{(n)}_{r}.
\end{equation}
The state space of this new Markov chain is $\{0, \frac{1}{n}, \frac{2}{n}, \ldots, 1\}$ and we assume that the initial state of the original chain $(G^{(n)}_{r}, \, r \in \mathbb{N}_{0})$ is given by $$i(y) =i= G_{0}^{n} = \lfloor{n y \rfloor}, \quad y \in [0,1].$$ We emphasize that the initial state is a function of $y$, but we will use the notation $i$ for simplicity. The transition operator $T_n$ of the Markov chain $(H^{(n)}_{r}, \, r \in \mathbb{N}_{0})$ is defined as follows:
\begin{equation}\label{trans_op_Jac}
T_{n}f\left(\frac{i}{n}\right)=\sum_{j=0}^{n}p_{ij}f\left(\frac{j}{n}\right),
\end{equation}
where $p_{ij}$ is defined in \eqref{WF_pij} and $p_i$ in \eqref{Jac_pi}.

Define the operator
\begin{equation}\label{CTRW_gen_Jac}
A_n := \theta n(T_n-I), \quad \theta >0, \quad f_n \in Dom(A_n), \quad f_n(y):=f\left(\frac{\lfloor{n y\rfloor}}{n}\right)=f\left(\frac{i}{n}\right),
\end{equation}
where $f \in C_{c}^{3}\left([0,1]\right)$.

Now by the following scaling of time in $(H^{(n)}_{r}, \, r \in \mathbb{N}_{0})$, for each $n \in \mathbb{N}$ we obtain the corresponding continuous-time process $(Y^{(n)}_t, \, t \geq 0)$:
\begin{equation}\label{TimeChange_Jac}
Y^{(n)}_t := H^{(n)}_{\lfloor \theta nt \rfloor}, \quad \theta >0.
\end{equation}

The next theorem states that the Jacobi diffusion could be obtained as the limiting process of the previously time-changed processes $(Y^{(n)}_t, \, t \geq 0)$.

\begin{theorem} \label{Thm_JacDiffusion}
Let $(H^{(n)}_{r}, \, r \in \mathbb{N}_{0})$, for each $n \in \mathbb{N}$, be the Markov chain defined by \eqref{new_chain_Jac} with the transition operator \eqref{trans_op_Jac}.  Let $Y^{n} = (Y^{(n)}_t, \, t \geq 0)$, for each $n \in \mathbb{N}$, be its corresponding time-changed process with the time-change  \eqref{TimeChange_Jac}. Let the operators $(A_{n}, \, n \in \mathbb{N})$ be defined by \eqref{CTRW_gen_Jac}. Then
\begin{equation*}
  Y^{n} \Rightarrow Y \,\,\text{in}\,\, \mathbb{D}([0,1]),
\end{equation*}
where $Y = (Y_t, \, t \geq 0)$ is the Jacobi diffusion with the infinitesimal generator $\mathcal{A}$ given by \eqref{Jacgenerator}, and
\begin{equation*}
  \delta = \frac{1}{2(a+b)}, \quad \gamma = \theta(a+b), \quad \mu = \frac{b}{a+b}.
\end{equation*}
\end{theorem}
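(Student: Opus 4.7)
The plan is to mirror the strategy of Theorem~\ref{Thm_OUDiffusion}: apply Theorem~\ref{chain_aprox} with the core $D = C_c^3([0,1])$ of the Jacobi generator $\mathcal{A}$, verify statement (a), and read off weak convergence from the equivalence (a) $\Leftrightarrow$ (d). The initial-state hypothesis is built in since $H^{(n)}_0 = \lfloor ny\rfloor / n \to y$.

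For $f \in C_c^3([0,1])$ I take the natural approximant $f_n(y) := f(\lfloor ny \rfloor / n)$. Uniform continuity of $f$ on the compact interval $[0,1]$ gives $\|f_n - f\|_\infty \to 0$. By the triangle-inequality splitting used in the OU proof, it then suffices to show that $\sup_i |A_n f(i/n) - \mathcal{A} f(i/n)| \to 0$, since the residual $\sup_y |A_n f(\lfloor ny\rfloor/n) - A_n f(y)|$ vanishes once $A_n f$ is shown to converge uniformly to a continuous limit.

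The key observation is that $T_n$ is an expectation against a binomial distribution: conditionally on the current state $i$, the next state $j$ is $\mathrm{Bin}(n, p_i)$, so
$$A_n f(i/n) \;=\; \theta n\, \mathbf{E}\bigl[f(j/n) - f(i/n)\bigr].$$
A third-order Taylor expansion of $f$ at $y = i/n$ rewrites the right-hand side as
$$\theta f'(y)\, n(p_i - y) \;+\; \tfrac{\theta}{2} f''(y)\, n\,\mathbf{E}\bigl[(j/n - y)^2\bigr] \;+\; R_n,$$
with $|R_n| \leq \tfrac{\theta}{6}\|f'''\|_\infty\, n\, \mathbf{E}|j/n - y|^3$. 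Plugging in \eqref{Jac_pi}, a direct calculation gives $n(p_i - y) = b - (a+b)y \to -(a+b)(y - b/(a+b))$ uniformly on $[0,1]$. Decomposing around the binomial mean yields $n\,\mathbf{E}[(j/n - y)^2] = p_i(1 - p_i) + n(p_i - y)^2 \to y(1-y)$ uniformly, since $p_i \to y$ and $(p_i - y)^2 = O(1/n^2)$. The standard binomial absolute-moment bound $\mathbf{E}|j - np_i|^3 = O(n^{3/2})$ (uniform in $p_i$), combined with $|p_i - y| = O(1/n)$, gives $n\,\mathbf{E}|j/n - y|^3 = O(n^{-1/2})$, so $R_n \to 0$ uniformly. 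The limit matches $\mathcal{A}f(y)$ after identifying $\gamma = \theta(a+b)$, $\mu = b/(a+b)$, and $\delta = 1/(2(a+b))$.

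The main technical obstacle, though not severe, is uniform control of the Taylor remainder across the whole compact state space $[0,1]$, including the boundary points $y = 0, 1$; this is handled by the binomial absolute-moment estimate, which is uniform in $p_i$. No separate boundary analysis is required because $p_0 = b/n > 0$ and $p_n = 1 - a/n \in (0,1)$ for $n > a + b$, so the expansion is valid throughout the state space. With condition (a) of Theorem~\ref{chain_aprox} verified, the equivalence with (d) delivers $Y^n \Rightarrow Y$ in $\mathbb{D}([0,1])$, completing the proof.
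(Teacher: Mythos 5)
Your proposal is correct and follows essentially the same route as the paper's proof: verifying condition (a) of Theorem~\ref{chain_aprox} for the core $C_c^3([0,1])$ by Taylor-expanding $A_n f(i/n)$ and computing the first, second, and third binomial moments of the jump, then invoking the equivalence with (d). The only (minor, favorable) difference is that you bound the remainder via the uniform absolute third central moment $\mathbf{E}|j-np_i|^3 = O(n^{3/2})$, which is slightly cleaner than the paper's computation with the signed third moment, and you are more explicit about uniformity over the state space.
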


\begin{proof}
Once again, we are in the setting of the Theorem \ref{chain_aprox} and follow the same procedure as in the Theorem \ref{Thm_OUDiffusion}. We first prove that statement $a)$ of Theorem \ref{chain_aprox} is valid for $A_n$ defined by \eqref{CTRW_gen_Jac} and $\mathcal{A}$ defined by \eqref{Jacgenerator}. Then we use the equivalence of statements $a)$ and $d)$ from Theorem \ref{chain_aprox} to obtain convergence $Y^{n} \Rightarrow Y$ in $\mathbb{D}([0,1])$ under assumption $Y^{(n)}_{0}\Rightarrow Y_{0}$, $n \to \infty$.

First, we prove the statement $a)$ from Theorem \ref{chain_aprox} in our setting:

$$||f_n-f||_{\infty}=\underset{y \in [0,1]}{sup}\, |f_n\left(y\right)-f\left(y\right)|=\underset{y \in [0,1]}{sup}\, |f\left(i/n\right)-f\left(y\right)|.$$
According to the well-known property $\lfloor n y \rfloor \leq n y< \lfloor n y \rfloor+1$ of the function $\lfloor \cdot \rfloor$, it follows that
\begin{equation*}
  \frac{i}{n} \leq  y< \frac{i}{n}+\frac{1}{n}
\end{equation*}
and therefore
\begin{equation*}
  \lim_{n \to \infty}\frac{i}{n}=y.
\end{equation*}
From this we obtain
$$||f_n-f||_{\infty} \rightarrow 0, \,\, n \rightarrow \infty$$
and
$$||A_n f_n-\mathcal{A} f||_{\infty}=\underset{y \in [0,1]}{sup} \,|A_n f_n(y)-\mathcal{A} f(y)|=\underset{y \in [0,1]}{sup}\, |A_n f(i/n)-\mathcal{A} f(y)|.$$

According to \eqref{CTRW_gen_Jac} it follows that
\begin{equation*}
  A_n f\left(\frac{i}{n}\right)= \theta n \left(\sum_{j=0}^{n}p_{ij}f\left(\frac{j}{n}\right)-f\left(\frac{i}{n}\right)\right)=\theta n \sum_{j=0}^{n}p_{ij}\left(f\left(\frac{j}{n}\right)-f\left(\frac{i}{n}\right)\right).
\end{equation*}

By the Taylor formula for function $f$ around $\frac{i}{n}$ with the mean-value form of the remainder  we obtain
\begin{equation}\label{gen_Jac}
A_n f\left(\frac{i}{n}\right)=\theta \sum_{j=0}^{n}p_{ij}\left(j-i\right)f'\left(\frac{i}{n}\right)+\theta \sum_{j=0}^{n}p_{ij}\frac{\left(j-i\right)^2}{2n}f''\left(\frac{i}{n}\right)+\theta \sum_{j=0}^{n}p_{ij}\frac{\left(j-i\right)^3}{6n^2}f'''\left(\zeta\right) ,
\end{equation}
where $\zeta$ is a real number such that $\lvert\zeta- \frac{i}{n}\rvert < \lvert \frac{j}{n}-\frac{i}{n}\rvert$. \\
Next, denote
\begin{equation*}
  \mu(y):=\lim_{n \to \infty}\sum\limits_{j=0}^{n}p_{ij}\left(j-i\right), \quad \sigma^2(y):=\lim_{n \to \infty}\sum\limits_{j=0}^{n}p_{ij}\frac{\left(j-i\right)^2}{n}, \quad R_n(y):=\theta \sum_{j=0}^{n}p_{ij}\frac{\left(j-i\right)^3}{6n^2}f'''\left(\zeta\right).
\end{equation*}

Taking into account \eqref{gen_Jac}, we obtain
\begin{equation}\label{gen_Jac_lim}
\lim_{n \to \infty} A_n f\left(\frac{i}{n}\right)=\theta \mu(y)f'\left(y\right)+\frac{\theta}{2} \sigma^{2}(y)f''\left(y\right)+\lim_{n \to \infty} R_n(y).
\end{equation}

From the time-homogeneity of the observed Markov chains and the fact that $f \in C^{3}_{c}([0,1])$, it follows that
\begin{align}\mu(y)&=\lim_{n \to \infty}\sum\limits_{j=0}^{n}p_{ij}\left(j-i\right)=\lim_{n \to \infty}E[G^{n}_1-G^{n}_0 | G^{n}_0=i]\nonumber\\
                     &=\lim_{n \to \infty} \left(n p_i -i \right)=\lim_{n \to \infty} \left(i\left(1-\frac{a}{n}\right)+\left(1-\frac{i}{n}\right)b-i\right)\nonumber\\
                     &=-(a+b)\lim_{n \to \infty}\left(\frac{i}{n}\right)+b=-(a+b)y+b, \label{mu_sig_jac1}
\end{align}
\begin{align}
         \sigma^2(y)&=\lim_{n \to \infty}\sum\limits_{j=0}^{n}p_{ij}\frac{\left(j-i\right)^2}{n}=\lim_{n \to \infty}\frac{1}{n}E[\left(G^{n}_1-G^{n}_0\right)^2 | G^{n}_0=i]\nonumber\\
                     &=\lim_{n \to \infty} \frac{1}{n}\left(n p_i(1-p_i)+n^2 p^2_{i}-2n i p_i +i^2 \right)=\lim_{n \to \infty} \left(p_i-p^2_i+\frac{\left(n p_i-i\right)^2}{n}\right)\nonumber\\
                     &=y-y^2=y(1-y), \label{mu_sig_jac2}
\end{align}
\begin{align}
         \lim_{n \to \infty}|R_n(y)| &\leq   K \bigg \lvert \theta \lim_{n \to \infty} \sum_{j=0}^{n}p_{ij}\frac{\left(j-i\right)^3}{6n^2}\bigg\rvert= K \bigg \lvert\theta \lim_{n \to \infty}\frac{1}{n^2}E[\left(G^{n}_1-G^{n}_0\right)^3 | G^{n}_0=i]\bigg \lvert\nonumber\\
                     &= K \bigg \lvert\theta\lim_{n \to \infty} \frac{1}{n^2}\left(n p_i(1-3p_i+2p^{2}_{i})+3n p_{i}(n p_i -i)(1-p_i)+(np_i-i)^3 \right)\bigg \lvert\nonumber \\
                     &= K \bigg \lvert\theta \lim_{n \to \infty} \left(\frac{p_i(1-3p_i+2p^{2}_{i})}{n}+\frac{3 p_{i}(n p_i -i)(1-p_i)}{n}+\frac{(np_i-i)^3}{n^2} \right)\bigg \lvert\nonumber\\
                     &=0, \label{Rn_jac2} \nonumber\\
\end{align}
where $K$ is a constant such that $|f'''(\zeta)| \leq K$ and
\begin{equation*}
  \lim_{n \to \infty} p_i = y \quad \text{and} \quad \lim_{n \to \infty} \frac{\left(n p_i-i\right)^2}{n}=0.
\end{equation*}

Finally, by substituting \eqref{mu_sig_jac1}, \eqref{mu_sig_jac2} and \eqref{Rn_jac2} in \eqref{gen_Jac_lim}, we obtain
\begin{equation*}
  \lim_{n \to \infty} A_n f\left(\frac{i}{n}\right)=\theta(-y(a+b)+b)f'(y)+\frac{1}{2}\theta y(1-y) f''(y).
\end{equation*}

Now by re-parametrizing
\begin{equation*}
  \delta = \frac{1}{2(a+b)}, \quad \gamma = \theta(a+b), \quad \mu = \frac{b}{a+b},
\end{equation*}
the last limit becomes
\begin{equation*}
  \lim_{n \to \infty} A_n f\left(\frac{i}{n}\right)=-\gamma(y-\mu)f'(y)+\frac{1}{2}(2\gamma\delta)y(1-y) f''(y),
\end{equation*}
which is precisely the infinitesimal generator of the Jacobi diffusion.
In the space $C^{3}_{c}([0,1])$ all above limits hold uniformly, therefore we obtain
$$||A_n f_n-\mathcal{A} f||_{\infty} \to 0, \quad n \to \infty,$$
and since  $$Y^{(n)}_{0}\Rightarrow Y_{0}, \quad n \to \infty \quad \iff \quad i/n \to y, \quad n \to \infty,$$
by Theorem \ref{chain_aprox} we obtain $Y^{n} \Rightarrow Y$ in $\mathbb{D}([0,1])$, where $Y$ is the generally parametrized Jacobi diffusion.

\end{proof}

\subsection{CIR process} \label{FracCIRDiff}

The CIR diffusion $Z = (Z_t, t \geq 0)$ is defined as the solution of the SDE
\begin{equation*}
dZ_t=-\theta \left(Z_t -\frac{b}{a} \right)dt+\sqrt{\frac{\theta}{a}Z_t}dW_t, \quad t\geq 0,\quad \theta >0,\quad a>0,\quad b>0,
\end{equation*}
with the infinitesimal generator
\begin{equation}\label{CIRgenerator}
\mathcal{A} f(z)=-\theta \left(z -\frac{b}{a} \right)f'(z)+\frac{1}{2}\frac{\theta}{a}z f''(z),\quad f \in C^{3}_{c}([0,\infty)).
\end{equation}
In this case, we assume there is only mutation of the order
$$\alpha=\frac{a}{n^{d}},\quad \beta=\frac{b}{n},\quad 0<d<1,\quad 0<a,\,\,b<\infty,\quad s=0,$$
so that expected fraction of $A$-types \eqref{WF_pi} becomes
\begin{equation}\label{CIR_pi}
p_i=\frac{i}{n}\left (1-\frac{a}{n^d} \right)+\left(1-\frac{i}{n} \right)\frac{b}{n}.
\end{equation}
For each $n \in \N$, we define the new Markov chain $(H^{(n)}_r, \,\, r \in \N)$ with the state space $\{0,\,\frac{1}{n^d},\dots, \frac{1}{n^{d-1}}\}$
\begin{equation}\label{new_chain_CIR}
H^{(n)}_r=\frac{G^{(n)}_r}{n^{d}}.
\end{equation}
We assume that the initial state space of the starting Markov chain $(G^{(n)}_r, \,\, r \in \mathbb{N}_{0})$ is
$$i(z)=i=G^{n}_0=\lfloor n^{d} z \rfloor,\,\, z \in [0,\,\infty ).$$
We assume that $n$ is always large enough so that $i(z)$ is in the state space of Markov chain $(G^{(n)}_r, \,\, r \in \mathbb{N}_{0})$. We emphasize that the initial state is a function of $z$, but we will use notation $i$ for simplicity. The transition operator $T_n$ of the Markov chain $(H^{(n)}_{r}, \,\, n \in \N)$ is given by
\begin{equation}\label{trans_op_CIR}
T_{n}f\left(\frac{i}{n}\right)=\sum_{j=0}^{n}p_{ij}f\left(\frac{j}{n}\right),
\end{equation}
where $p_{ij}$ is defined in \eqref{WF_pij} and $p_i$ in \eqref{CIR_pi}.
Now we define operator
\begin{equation}\label{CTRW_gen_CIR}
A_n: = \frac{\theta}{a} n(T_n-I),\quad\theta >0,\quad f_n \in Dom(A_n), \quad f_n(z):=f\left(\frac{\lfloor{n^d z\rfloor}}{n^d}\right)=f\left(\frac{i}{n^d}\right)
\end{equation}
where $f \in C_{c}^{3}\left([0,\infty)\right)$
and by the following scaling of time in $(H^{(n)}_{r}, \, r \in \mathbb{N}_{0}),$ for each $n \in \N$ we obtain the corresponding continuous-time process $(Z^{(n)}_t, \, t \geq 0)$:
\begin{equation}\label{TimeChange_CIR}
Z^{(n)}_t : = H^{(n)}_{\lfloor \frac{\theta }{a}n^d t \rfloor}, \quad \theta >0.
\end{equation}
The next theorem states that the CIR diffusion could be obtained as the limiting process of the time-changed processes $(Z^{(n)}_t, \, t \geq 0)$.

\begin{theorem} \label{Thm_CIRDiffusion}
Let $(H^{(n)}_{r}, \, r \in \mathbb{N}_{0})$, for each $n \in \mathbb{N}$, be the Markov chain defined by \eqref{new_chain_CIR} with the transition operator \eqref{trans_op_CIR}. Let $Z^{n} = (Z^{(n)}_t, \, t \geq 0)$, for each $n \in \mathbb{N}$, be its corresponding time-changed process, with the time-change  \eqref{TimeChange_CIR}. Let the operators $(A_{n}, \, n \in \mathbb{N})$ be defined by \eqref{CTRW_gen_CIR}.  Then
\begin{equation*}
  Z^{n} \Rightarrow Z \,\,\text{in}\,\, \mathbb{D}(\R^+),
\end{equation*}
where $Z = (Z_t, \, t \geq 0)$ is the CIR diffusion with the infinitesimal generator $\mathcal{A}$ given by \eqref{CIRgenerator}.
\end{theorem}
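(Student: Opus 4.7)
The plan is to mirror the strategy of Theorem \ref{Thm_JacDiffusion}: invoke Theorem \ref{chain_aprox} and verify the equivalent condition $a)$, i.e., exhibit $f_n \in \mathrm{Dom}(A_n)$ with $f_n \to f$ and $A_n f_n \to \mathcal{A} f$ uniformly, where $\mathcal{A}$ is the CIR generator \eqref{CIRgenerator} and $f \in C^{3}_c([0,\infty))$. The natural choice $f_n(z) = f(i/n^d)$ with $i = \lfloor n^d z \rfloor$ handles the first convergence: the bracket estimate $i/n^d \le z < i/n^d + 1/n^d$ gives $i/n^d \to z$ pointwise, and uniform continuity of $f$ (compactly supported $C^3$) upgrades this to $\|f_n - f\|_\infty \to 0$.

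The core of the argument is the expansion of $A_n f(i/n^d)$, which reads
\begin{equation*}
A_n f\!\left(\tfrac{i}{n^d}\right) = \tfrac{\theta n^d}{a}\sum_{j=0}^{n} p_{ij}\left(f\!\left(\tfrac{j}{n^d}\right) - f\!\left(\tfrac{i}{n^d}\right)\right),
\end{equation*}
and the Taylor formula of order three around $i/n^d$ produces three terms driven by the conditional moments of $G^{(n)}_1 - G^{(n)}_0$ given $G^{(n)}_0 = i$, divided by the appropriate powers of $n^d$. Using $p_i = \frac{i}{n}(1-a/n^d) + (1-i/n)b/n$ together with $i/n \sim z/n^{1-d} \to 0$ (this is what makes the case $d<1$ crucial), I would compute
\begin{align*}
\tfrac{1}{a}\, \mathbb{E}\!\left[G^{(n)}_1 - i \mid G^{(n)}_0 = i\right] &= \tfrac{np_i - i}{a} \longrightarrow -\left(z - \tfrac{b}{a}\right),\\
\tfrac{1}{a n^d}\, \mathbb{E}\!\left[(G^{(n)}_1 - i)^2 \mid G^{(n)}_0 = i\right] &= \tfrac{np_i(1-p_i)}{a n^d} + \tfrac{(np_i - i)^2}{a n^d} \longrightarrow \tfrac{z}{a},
\end{align*}
since $p_i \to 0$ and $np_i/n^d \to z$, while $(np_i - i)^2/n^d \to 0$.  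Multiplying by $\theta$ and the respective derivatives of $f$ recovers exactly the CIR drift and diffusion of \eqref{CIRgenerator}.

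The last piece is the cubic remainder $R_n(z) = \frac{\theta}{a} n^d \sum_j p_{ij} \frac{(j-i)^3}{6 n^{3d}} f'''(\zeta)$, bounded via $\|f'''\|_\infty$ by a constant times $n^{-2d}\,\mathbb{E}[(G^{(n)}_1 - i)^3 \mid G^{(n)}_0 = i]$. Expanding this third central-shifted moment as $np_i(1-p_i)(1-2p_i) + 3np_i(1-p_i)(np_i - i) + (np_i - i)^3$ and dividing by $n^{2d}$, each summand is seen to vanish using $np_i \sim n^d z$, $p_i \to 0$, and $(np_i - i) = O(1)$. Uniformity on $\R^+$ follows from the compact support of $f$ and its derivatives, yielding $\|A_n f_n - \mathcal{A}f\|_\infty \to 0$; the equivalence $a)\Leftrightarrow d)$ in Theorem \ref{chain_aprox} then delivers $Z^n \Rightarrow Z$ in $\mathbb{D}(\R^+)$ provided $Z^{(n)}_0 \Rightarrow Z_0$, which is immediate from $i/n^d \to z$.

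The main technical obstacle, relative to the Jacobi proof, is the delicate bookkeeping in the rare-event regime: the binomial success probability $p_i$ itself is vanishing of order $n^{d-1}$, so getting each moment to scale correctly with the chosen power $n^d$ (rather than $n$) is where the condition $0 < d < 1$ and the choice of scaling $\frac{\theta}{a} n^d$ in $A_n$ must be reconciled carefully — in effect, the CIR diffusion is obtained from the Poisson, rather than Gaussian, limit of the underlying binomial step.
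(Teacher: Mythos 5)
Your proposal is correct and follows essentially the same route as the paper's proof: verify condition $a)$ of Theorem \ref{chain_aprox} via a third-order Taylor expansion of $A_n f(i/n^d)$, compute the limits of the first and second conditional moments of $G^{(n)}_1 - i$ scaled by $n^d$ (yielding drift $-az+b$ and diffusion coefficient $z$ after multiplying by $\theta/a$), and kill the cubic remainder using the same third-moment expansion $np_i(1-3p_i+2p_i^2)+3np_i(np_i-i)(1-p_i)+(np_i-i)^3$ divided by $n^{2d}$. The only difference is expository (your closing remark on the Poisson-type scaling regime), not mathematical.
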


\begin{proof}

 We first prove that statement $a)$ of the Theorem \ref{chain_aprox} is valid for $A_n$ defined by \eqref{CTRW_gen_CIR} and $\mathcal{A}$ defined by \eqref{CIRgenerator} and then we use the equivalence of statements $a)$ and $d)$ from Theorem \ref{chain_aprox} to obtain convergence $Z^{n} \Rightarrow Z$ in $\mathbb{D}(\R^+)$ under assumption $Z^{(n)}_{0}\Rightarrow Z_{0}$, $n \to \infty$.

We first prove the statement $a)$ from Theorem \ref{chain_aprox} in our setting:

$$||f_n-f||_{\infty}=\underset{z \in \R^{+}}{sup}\, |f_n\left(z\right)-f\left(z\right)|=\underset{z \in \R^{+}}{sup}\, |f\left(i/n^{d}\right)-f\left(z\right)|.$$
By definition of the function $\lfloor \cdot \rfloor$ we have
$$\lfloor n^d z \rfloor \leq n^d z< \lfloor n^d z \rfloor+1$$
so it follows
$$\frac{i}{n^d} \leq  z< \frac{i}{n^d}+\frac{1}{n^d}$$
and therefore
$$\lim_{n \to \infty}\frac{i}{n^d}=z.$$
so using this we obtain
$$||f_n-f||_{\infty} \rightarrow 0, \,\, n \rightarrow \infty$$
and
$$||A_n f_n-\mathcal{A} f||_{\infty}=\underset{z \in \R^{+}}{sup} \,|A_n f_n(z)-\mathcal{A} f(z)|=\underset{z \in \R^{+}}{sup}\, |A_n f(i/n^{d})-\mathcal{A} f(z)|.$$

According to \eqref{CTRW_gen_CIR} it follows
$$A_n f\left(\frac{i}{n^d}\right)= \frac{\theta }{a}n^d \left(\sum_{j=0}^{n}p_{ij}f\left(\frac{j}{n^d}\right)-f\left(\frac{i}{n^d}\right)\right)=\frac{\theta }{a}n^d \sum_{j=0}^{n}p_{ij}\left(f\left(\frac{j}{n^d}\right)-f\left(\frac{i}{n^d}\right)\right). $$
Now by the Taylor formula for function $f$ around $\frac{i}{n^d}$ with mean-value form of the remainder  we obtain
\begin{equation}\label{gen_CIR}
A_n f\left(\frac{i}{n^n}\right)=\frac{\theta}{a} \sum_{j=0}^{n}p_{ij}\left(j-i\right)f'\left(\frac{i}{n^d}\right)+\frac{\theta}{2a} \sum_{j=0}^{n}p_{ij}\frac{\left(j-i\right)^2}{n^d}f''\left(\frac{i}{n^d}\right)+\frac{\theta}{6a} \sum_{j=0}^{n}p_{ij}\frac{\left(j-i\right)^3}{n^{2d}}f'''\left(\zeta\right)
\end{equation}
where $\zeta$ is a real number such that $\lvert\zeta- \frac{i}{n^d}\rvert < \lvert \frac{j}{n^d}-\frac{i}{n^d}\rvert$. \\
Let us denote$$\mu(z):=\lim_{n \to \infty}\sum\limits_{j=0}^{n}p_{ij}\left(j-i\right),\quad \sigma^2(z):=\lim_{n \to \infty}\sum\limits_{j=0}^{n}p_{ij}\frac{\left(j-i\right)^2}{n^d}, \quad R_n(z):=\frac{\theta}{6a} \sum_{j=0}^{n}p_{ij}\frac{\left(j-i\right)^3}{n^{2d}}f'''\left(\zeta\right).$$
Together with \eqref{gen_CIR} we obtain
\begin{equation}\label{gen_CIR_lim}
\lim_{n \to \infty} A_n f\left(\frac{i}{n^d}\right)=\frac{\theta}{a} \mu(z)f'\left(z\right)+\frac{\theta}{2a} \sigma^{2}(z)f''\left(z\right)+\lim_{n \to \infty} R_n(z).
\end{equation}
From the time-homogeneity of the Markov chains, it follows that
\begin{align}\label{mu_sig_CIR}\mu(z)&=\lim_{n \to \infty}\sum\limits_{j=0}^{n}p_{ij}\left(j-i\right)=\lim_{n \to \infty}E[G^{n}_1-G^{n}_0 | G^{n}_0=i]\nonumber\\
                     &=\lim_{n \to \infty} \left(n p_i -i \right)=\lim_{n \to \infty} \left(i\left(1-\frac{a}{n^d}\right)+\left(1-\frac{i}{n}\right)b-i\right)\nonumber\\
                     &=-a\lim_{n \to \infty}\left(\frac{i}{n^d}\right)-\lim_{n \to \infty}\left(\frac{i}{n}\right)b+b=-az+b,
\end{align}
\begin{align}\label{mu_sig_CIR_2}
         \sigma^2(z)&=\lim_{n \to \infty}\sum\limits_{j=0}^{n}p_{ij}\frac{\left(j-i\right)^2}{n^d}=\lim_{n \to \infty}\frac{1}{n^d}E[\left(G^{n}_1-G^{n}_0\right)^2 | G^{n}_0=i]\nonumber\\
                     &=\lim_{n \to \infty}\frac{1}{n^d} \left(n p_i(1-p_i)+n^2 p^2_{i}-2n i p_i +i^2 \right)\nonumber\\
                     &=\lim_{n \to \infty}\frac{1}{n^d} \left(n p_i(1-p_i)+(n p_i -i)^2\right)\nonumber\\
                     &=z,
\end{align}
\begin{align}
         \lim_{n \to \infty}|R_n(z)| &\leq   K \bigg \lvert \frac{\theta}{6 a} \lim_{n \to \infty} \sum_{j=0}^{n}p_{ij}\frac{\left(j-i\right)^3}{n^{2d}}\bigg\rvert= K \bigg \lvert \frac{\theta}{6 a} \lim_{n \to \infty}\frac{1}{n^{2d}}E[\left(G^{n}_1-G^{n}_0\right)^3 | G^{n}_0=i]\bigg \lvert\nonumber\\
                     &= K \bigg \lvert\frac{\theta}{6 a}\lim_{n \to \infty} \frac{1}{n^{2d}}\left(n p_i(1-3p_i+2p^{2}_{i})+3n p_{i}(n p_i -i)(1-p_i)+(np_i-i)^3 \right)\bigg \lvert\nonumber \\
                     &= K \bigg \lvert\frac{\theta}{6 a} \lim_{n \to \infty} \left(\frac{n p_i}{n^d}\frac{(1-3p_i+2p^{2}_{i})}{n^d}+\frac{3 n p_{i}}{n^d}\frac{(n p_i -i)(1-p_i)}{n^d}+\frac{(np_i-i)^3}{n^{2d}} \right)\bigg \lvert\nonumber\\
                     &=0, \label{Rn_CIR2} \nonumber\\
\end{align}
where $K$ is a constant such that $|f'''(\zeta)| \leq K$ and
$$\frac{n p_i}{n^d} \rightarrow z\,\,\,\, p_i \rightarrow 0,\,\,\,\, \frac{(n p_i -i)^2}{n^d}\rightarrow 0,\,\,\,\, n \to \infty.$$
Finally, substituting \eqref{mu_sig_CIR}, \eqref{mu_sig_CIR_2} and \eqref{Rn_CIR2}  in \eqref{gen_CIR_lim} we obtain
\begin{align*}
\lim_{n \to \infty} A_n f\left(\frac{i}{n^d}\right)&=\frac{\theta}{a}\left(-az+b\right)f'(z)+\frac{\theta}{2 a} z f''(z)\\
                                                   &=-\theta\left(z-\frac{b}{a}\right)f'(z)+\frac{1}{2}\frac{\theta}{a} z f''(z).
\end{align*}

Comparing the obtained limit with \eqref{CIRgenerator} we see that the limit coincides with the generator of the CIR diffusion.
Since  $f \in C^{3}_{c}([0,\infty))$, all above limits hold uniformly, i.e., we  obtain
$$||A_n f_n-\mathcal{A} f||_{\infty} \to 0, \quad n \to \infty,$$
and since $$Z^{(n)}_{0}\Rightarrow Z_{0}, \quad n \to \infty \quad \iff \quad i/n^{d} \to z, \quad n \to \infty,$$
by Theorem \ref{chain_aprox} we obtain $Z^{n} \Rightarrow Z$ in $\mathbb{D}(\R^+)$, where $Z$ is the generally parametrized CIR diffusion.

\end{proof}

\section{Fractional OU, Jacobi and CIR diffusions as the correlated CTRWs limits}

Suppose that $(T_{r}, \, r \in \mathbb{N}_{0})$, where $T_{0}=0$, $T_{r}=G_{1}+\ldots+G_{r}$, is the random walk where $G_{r} \geq 0$ are iid waiting times between particle jumps that are independent of the Markov chain $(H^{(n)}_{r}, \, r \in \mathbb{N}_{0})$. We assume $G_{1}$ is in the domain of attraction of the $\beta$-stable distribution with index $0 < \beta <1$, and that the waiting time of the Markov chain until its $r$-th move is described by $G_r$.   Let \begin{equation}\label{renewal}
N_{t} = \max\{r \geq 0 \colon T_{r} \leq t\}
\end{equation}
be the number of jumps up to time $t \geq 0$. Then the continuous time stochastic process
\begin{equation*}
\left( H^{(n)}(N_{t}), \, t \geq 0 \right),
\end{equation*}
where $H^{(n)}(N_{t})$ is the state of the Markov chain at time $t \geq 0$,
 is the correlated CTRW process. We now state the main result of this section by defining the correlated CTRWs that converge to the fractional OU, CIR, and Jacobi diffusions.

\begin{theorem}\label{CTRWconv}
Let $(H^{(n)}_{r}, \, r \in \mathbb{N}_{0})$ be the Markov chain defined by \eqref{new_chain_OU} in the case of OU diffusion, by \eqref{new_chain_Jac} in the case of Jacobi diffusion and by \eqref{new_chain_CIR} in the case of CIR diffusion. Let $(X^{(n)}(t), \, t \geq 0), \quad (Y^{(n)}(t), \, t \geq 0)$ and $(Z^{(n)}(t), \, t \geq 0)$ be the corresponding rescaled Markov chains given by \eqref{TimeChange_OU}, \eqref{TimeChange_Jac} and \eqref{TimeChange_CIR} respectively.
Let $(N(t), \, t \geq 0)$ be the renewal process defined in \eqref{renewal}, and $(E_{t}, \, t \geq 0)$ be the inverse of the standard $\beta$-stable subordinator $(D(t), \, t \geq 0)$ with $0 < \beta < 1$. Then
\begin{align*}
&X^{(n)}(n^{-1} N(n^{\frac{1}{\beta}}t)) \Rightarrow X(E_t),\quad n \to \infty,\\
&Y^{(n)}(n^{-1} N(n^{\frac{1}{\beta}}t)) \Rightarrow Y(E_t),\quad n \to \infty, \\
&Z^{(n)}(n^{-1}N(n^{\frac{1}{\beta}}t)) \Rightarrow Z(E_t),\quad n \to \infty,
\end{align*}
in the Skorokhod space $\mathbb{D}(S)$ with $J_1$ topology, where $\left ( X(t), \, t \geq 0 \right )$ is Ornstein-Uhlenbeck diffusion with generator \eqref{OUgenerator}, $\left ( Y(t), \, t \geq 0 \right )$ is Jacobi diffusion with generator \eqref{Jacgenerator} and $\left ( Z(t), \, t \geq 0 \right )$ is  CIR diffusion with generator \eqref{CIRgenerator}.
\end{theorem}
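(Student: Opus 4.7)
The plan is to combine the diffusion approximations of the discrete-time Markov chains (Theorems \ref{Thm_OUDiffusion}, \ref{Thm_JacDiffusion}, \ref{Thm_CIRDiffusion}) with the classical CTRW scaling limit for the renewal process $N(t)$, and then pass the product limit through a composition continuous mapping theorem. The three cases (OU, Jacobi, CIR) are entirely parallel, so I would write the argument in detail for $X^{(n)}$ and indicate that the Jacobi and CIR statements follow by the same route on their respective state spaces.

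First, Theorem \ref{Thm_OUDiffusion} already gives $X^{(n)} \Rightarrow X$ in $\mathbb{D}(\R)$ with the $J_1$ topology. On the other side, since the iid waiting times $(G_r)_{r\ge 1}$ lie in the domain of attraction of a positively skewed $\beta$-stable law with $\beta\in(0,1)$, a standard renewal-theoretic argument (\cite[Theorem 4.19]{MeerschaertSikorskii_2011}, as already invoked in the Introduction) yields $c^{-\beta} N(ct) \Rightarrow E_t$ in $\mathbb{D}([0,\infty))$ with the $J_1$ topology as $c\to\infty$. Setting $c = n^{1/\beta}$ gives
\[
n^{-1} N(n^{1/\beta} t) \Rightarrow E_t, \quad n \to \infty.
\]
By the standing assumption that the Markov chain $(H^{(n)}_r,\,r\in\mathbb{N}_0)$ is independent of the waiting times $(G_r)$, the processes $X^{(n)}$ and $n^{-1}N(n^{1/\beta}\cdot)$ are independent for every $n$, while the limits $X$ and $E$ are independent by the very definition of the fractional Pearson diffusion. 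Therefore the two marginal convergences can be combined into the joint convergence
\[
\bigl(X^{(n)}(\cdot),\, n^{-1} N(n^{1/\beta} \cdot)\bigr) \Rightarrow \bigl(X(\cdot),\, E_{\cdot}\bigr)
\]
in the product Skorokhod topology.

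Next, I would invoke a composition continuous mapping theorem (for instance the one underlying \cite[Theorem 4.19]{MeerschaertSikorskii_2011}, see also \cite{Henry_Straka_2011}): the map $(x,y)\mapsto x\circ y$ is continuous at $(x,y)\in \mathbb{D}(\R)\times \mathbb{D}([0,\infty))$ whenever $x$ has continuous sample paths and $y$ is nondecreasing and continuous. All three Pearson diffusions $X, Y, Z$ have continuous sample paths a.s., and $E_t$ is a.s.\ nondecreasing and continuous (albeit with constant plateaus matching the jumps of the underlying $\beta$-stable subordinator). Hence the composition map is continuous at the joint limit, and the CMT yields $X^{(n)}(n^{-1}N(n^{1/\beta}t)) \Rightarrow X(E_t)$ in $\mathbb{D}(\R)$ with the $J_1$ topology, and analogously $Y^{(n)}(n^{-1}N(n^{1/\beta}t)) \Rightarrow Y(E_t)$ in $\mathbb{D}([0,1])$ and $Z^{(n)}(n^{-1}N(n^{1/\beta}t)) \Rightarrow Z(E_t)$ in $\mathbb{D}(\R^{+})$.

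The step that deserves the most care is the composition itself: $E_t$ is only nondecreasing, not strictly increasing, so the naive form of the CMT (which presumes strict monotonicity of the time change) does not directly apply. What rescues the argument is the continuity of the outer Pearson diffusion -- constant plateaus in $E$ produce constant plateaus in $X\circ E$ rather than jumps, so $X(E_{\cdot})$ is itself a.s.\ continuous and the composition map is $J_1$-continuous at the limit point. Beyond this, the argument is a routine combination of the already established diffusion approximations with the classical CTRW limit theory cited in the Introduction, and no genuinely new technical difficulty arises.
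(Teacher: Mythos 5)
Your proposal is correct, but it follows a genuinely different route from the paper. The paper never passes to the inverse process at the prelimit level: it first proves $n^{-1/\beta}T(\lceil nt\rceil)\Rightarrow D_t$ in $J_1$ (via Bingham's theorem for monotone processes), upgrades this to joint convergence $\bigl(X^{(n)}(t),\,n^{-1/\beta}T(\lceil nt\rceil)\bigr)\Rightarrow (X_t,D_t)$, and then applies the Henry--Straka map $\Psi(\beta,\sigma)=\beta\circ\sigma^{-1}$, which is continuous at pairs whose second coordinate is \emph{strictly increasing} and unbounded --- a condition checked on the subordinator $D$, with the inversion $T\mapsto N$, $D\mapsto E$ built into the map itself. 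You instead invert first, using $c^{-\beta}N(ct)\Rightarrow E_t$, form the joint limit $\bigl(X^{(n)},\,n^{-1}N(n^{1/\beta}\cdot)\bigr)\Rightarrow (X,E)$ by independence, and apply the ordinary composition mapping theorem, whose continuity point condition ($x$ continuous, $y$ continuous and nondecreasing) you correctly verify; you also correctly identify and defuse the one real danger, namely that $E$ has plateaus and is not strictly increasing, which is harmless precisely because the outer Pearson diffusion is continuous. Both arguments are sound here. What the paper's route buys is robustness: the $\Psi$-map machinery works even when the outer process has jumps (as for stable L\'evy outer processes in the general CTRW theory), where naive composition through a plateauing $E$ would fail. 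What your route buys is economy: it reuses the classical inverse-process limit theorem and a textbook composition CMT, avoiding the measurability and continuity analysis of $\Psi$ on $D_{\uparrow\uparrow,u}$. Two minor points you should tighten if writing this up: the joint convergence from independent marginals lands you in the product topology on $\mathbb{D}(S)\times\mathbb{D}(\R^{+})$ (which is what the composition theorem needs, so this is fine, but it is not automatically the $J_1$ topology on $\mathbb{D}(S\times\R^{+})$ that the paper invokes); and the independence of the limits $X$ and $E$ should be justified by the independence of the prelimit pairs rather than ``by definition of the fPD,'' which reads as circular.
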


\begin{proof}

From \cite[Section 4.4]{MeerschaertSikorskii_2011}
$$n^{-\frac{1}{\beta}} T(\lceil nt \rceil) \Rightarrow D_t, \quad n \to \infty$$
in the sense of finite dimensional distributions, where $(D_t, \,t \geq 0)$ is $\beta$-stable subordinator.
Since $\beta$-stable subordinator $D_t$ is a L\'evy process, it follows that $D_t$ is continuous in probability. Since the sample paths of the process
$T(\lceil n t \rceil )$ are monotone non-decreasing, \cite[Theorem 3]{Bingham_1971} yields
\begin{equation}\label{J1conv_Et}
n^{-\frac{1}{\beta}} T(\lceil nt \rceil) \Rightarrow D_t, \quad n \to \infty
\end{equation}
in the Skorokhod space $\mathbb{D}(\R^+)$ with $J_1$ topology. From \eqref{J1conv_Et} and Theorems \ref{Thm_OUDiffusion}, \ref{Thm_JacDiffusion} and \ref{Thm_CIRDiffusion} we obtain the joint convergence
\begin{align} \label{AtEtjointconvergence}
  &\left( X^{(n)}(t), n^{-\frac{1}{\beta}} T(\lceil nt \rceil) \right) \Rightarrow \left( X_{t}, D_{t} \right), \quad n \to \infty,\\
  &\left( Y^{(n)}(t), n^{-\frac{1}{\beta}} T(\lceil nt \rceil) \right) \Rightarrow \left( Y_{t}, D_{t} \right), \quad n \to \infty,\\
  &\left( Z^{(n)}(t), n^{-\frac{1}{\beta}} T(\lceil nt \rceil) \right) \Rightarrow \left( Z_{t}, D_{t} \right), \quad n \to \infty,
\end{align}
in the product space $\mathbb{D}(S \times \R^+)$ with $J_1$ topology, where $S=\R$ in Ornstein-Uhlenbeck diffusion case, $S=[0,1]$ in Jacobi diffusion case and $S=\R^+$ in CIR diffusion case.
Following the notation from \cite{Henry_Straka_2011} let
$$\alpha = (\beta, \sigma) \in \mathbb{D} (S \times \R^{+}), \,\, \beta \in \mathbb{D} (S),\, \sigma \in \mathbb{D} (\R^+),$$
and $D_u, D_{\uparrow}$ and $D_{\uparrow \uparrow}$ be sets of all such $\alpha$ which have unbounded, non-decreasing and increasing $\sigma$, respectively.
As shown in \cite{Henry_Straka_2011} sets
$$D_{\uparrow, u}=D_{\uparrow} \cap D_{u},\,\,D_{\uparrow \uparrow, u}=D_{\uparrow \uparrow} \cap D_{u}$$
are Borel measurable.

Introduce the function
$$\Psi : D_{\uparrow, u} \mapsto \mathbb{D} (S \times \R^{+}), \,\, \Psi(\alpha)=\beta \circ \sigma^{-1}.
$$
From \cite[Proposition 2.3]{Henry_Straka_2011} function $\Psi$ is continuous in $D_{\uparrow \uparrow, u}.$

Note that $\left( X^{(n)}(t), n^{-\frac{1}{\beta}} T(\lceil nt \rceil) \right), \, \, \left( X_{t}, D_{t} \right) $ are in the domain of function $\Psi$, i.e. $$\left( X^{(n)}(t), n^{-\frac{1}{\beta}} T(\lceil nt \rceil) \right), \, \, \left( X_{t}, D_{t} \right) \in D_{\uparrow, u}.$$
Also, since the standard $\beta$-stable  subordinator $(D_{t},\,t\geq 0)$ is strictly increasing, it follows
\begin{equation}\label{sub_inc}
\left( X_{t}, D_{t} \right) \in D_{\uparrow \uparrow, u}.
\end{equation}
Observe that for the generalized inverses we have
$$(n^{-\frac{1}{\beta}} T(\lceil nt \rceil))^{-1}=n^{-1} N(n^{\frac{1}{\beta}}t), \,\,\, (D_t)^{-1}=E_t.$$
Since the function $\Psi$ is continuous at $D_{\uparrow \uparrow, u}$,
 $$\Psi\left( X^{(n)}(t),\, n^{-\frac{1}{\beta}} T(\lceil nt \rceil) \right)=  X^{(n)}(n^{-1} N(n^{\frac{1}{\beta}}t))$$
 and
 $$\Psi\left ( X(t), D(t) \right )=X(E_t),$$
 using \eqref{AtEtjointconvergence} and \eqref{sub_inc} it follows
$$ X^{(n)}(n^{-1} N(n^{\frac{1}{\beta}}t)) \Rightarrow  X(E_t)$$
in the space $\mathbb{D}(\R)$ with  $J_1$ topology.
The proofs for convergence of the respective CTRWs to fractional CIR or Jacobi diffusion are the same as in the case of convergence to fractional OU diffusion.
\end{proof}
\begin{remark}
The proof of Theorem \ref{CTRWconv} uses the approach from the proof of \cite[Theorem 3.6]{Henry_Straka_2011} but in our case the first component is Markov chain, rather then a random walk.
\end{remark}
\begin{remark}
Note that
\begin{align*}
X^{(n)}(n^{-1} N(n^{\frac{1}{\beta}}t)) &= H^{(n)}\left(\lfloor 2^{-1}\theta N(n^{\frac{1}{\beta}}t)\rfloor \right),\\
Y^{(n)}(n^{-1} N(n^{\frac{1}{\beta}}t)) &= H^{(n)}\left(\lfloor \theta N(n^{\frac{1}{\beta}}t) \rfloor\right),\\
Z^{(n)}(n^{-1} N(n^{\frac{1}{\beta}}t)) &= H^{(n)}\left(\lfloor a^{-1}\theta n^{d-1} N(n^{\frac{1}{\beta}}t) \rfloor \right),
\end{align*}
where $(H^{(n)}_{r}, \, r \in \mathbb{N}_{0})$ is corresponding Markov chain given by \eqref{new_chain_OU} for the OU, \eqref{new_chain_Jac} for Jacobi and by  \eqref{new_chain_CIR} for CIR case.
\end{remark}
\bigskip \bigskip

\textbf{Acknowledgements} \newline

The authors wish to thank the referee for the comments and suggestions which have led to
the improvement of the manuscript.

N.N. Leonenko was supported in particular by Cardiff Incoming Visiting Fellowship
Scheme and International Collaboration Seedcorn Fund, Cardiff  Data Innovation Research Institute Seed Corn Funding, Australian Research
Council's Discovery Projects funding scheme (project number DP160101366), and by
projects MTM2012-32674 and MTM2015--71839--P (co-funded with Federal funds), of the
DGI, MINECO, Spain.

N. \v{S}uvak and I. Papi\'c were supported by the project IZIP-2016 funded by the J.J. Strossmayer University of Osijek. I. Papi\'c was also supported by the Almae Matris Alumni Croaticae UK (AMAC UK), Association of Alumni and Friends of Croatian Universities in the United Kingdom.

\newpage

\bibliographystyle{abrv}
\bibliography{CTRWS}

\end{document}